\documentclass[letterpaper,11pt,reqno]{amsart} 
\usepackage[portrait,margin=3cm]{geometry}
\usepackage{systeme}
\usepackage{mathrsfs,xfrac} 
\usepackage[colorlinks=true,linkcolor=blue,citecolor=blue,urlcolor=blue]{hyperref} 
\usepackage{amsmath,amssymb,amsthm,amsfonts,amsbsy,latexsym,dsfont,color} 
\usepackage[numeric,initials,nobysame]{amsrefs} 
\usepackage[foot]{amsaddr}

\usepackage[utf8]{inputenc}
\usepackage[english]{babel}
\usepackage{comment}

\usepackage[textsize=tiny]{todonotes}
\usepackage{regexpatch}
\makeatletter
\xpatchcmd{\@todo}{\setkeys{todonotes}{#1}}{\setkeys{todonotes}{inline,#1}}{}{}
\makeatother

\usepackage{enumerate}

\renewcommand{\le}{\leqslant} 
\renewcommand{\ge}{\geqslant}

         \let\gS=\Sigma  
                                
\newcommand{\cN}{\mathcal{N}}

\DeclareMathOperator{\E}{\mathds{E}}

\DeclareMathOperator{\N}{N}

\usepackage{enumitem}
\numberwithin{equation}{section}
\newtheorem{theorem}{Theorem}[section]
\newtheorem{proposition}[theorem]{Proposition}
\newtheorem{lemma}[theorem]{Lemma}
\newtheorem{remark}[theorem]{Remark}

\providecommand{\E}{\mathbb E}
\providecommand{\R}{\mathbb R}
\providecommand{\N}{\mathbb N}
\providecommand{d}{\,\mathrm d}
\providecommand{\SigmaNS}{\Sigma_{N,S}}
\providecommand{\Wtil}{\widetilde W}
\providecommand{\Dtil}{\widetilde D}

\makeatletter
\@namedef{subjclassname@2020}{\textup{2020} Mathematics Subject Classification}
\makeatother

\title[Local fields for the GS model]{Limiting Law of Local Fields for the Mean Field Ghatak-Sherrington Model}
\author[Chen]{Yunhui Chen$^\star$}
\author[Fierro]{Keaton Fierro$^\dagger$}

\address{School of Mathematics, University of Minnesota, 127 Vincent Hall 206 Church St. SE Minneapolis, MN 55455}
\email{$^\star$chen8668@umn.edu, $^\dagger$fierr036@umn.edu}
\date{\today}
\subjclass[2020]{Primary: 82B26, 82B44, 60F05}
\keywords{Spin glass, Ghatak-Sherrington model, local fields, limit theorems
}

\begin{document}

\begin{abstract}
We study the limiting law of local field in the mean field Ghatak-Sherrington model with spin values $\{0,\pm1,\ldots,\pm S\}$ in the high-temperature regime. Using the cavity method and the quantitative overlap and self-overlap estimates of Sheng-Wu~\cite{SW24}, we prove a central limit theorem for the non-centered cavity field. As a result, we then identify the local field limiting law as a random finite mixture of Gaussian distributions. The proof is based on moment method and gives a quantitative error bound.
\end{abstract}

\maketitle

\section{Introduction and Main Results}

Spin glass models are parameterized families of disordered Gibbs measures on high dimensional spaces, originally introduced to model magnetic alloys with competing random interactions. The classical mean field example is the Sherrington-Kirkpatrick (SK) model \cite{SherringtonKirkpatrick1975} on binary cube. Rigorous understanding of mean field spin glass theory has seen notable progress in the last several decades, particularly after Guerra's interpolation bound \cite{Guerra2003}, Talagrand's proof of the Parisi variational formula for the limiting free energy \cite{Talagrand2006}, and Panchenko's development of the general mean-field theory, including the Ghirlanda-Guerra identities and ultrametricity \cite{Panchenko2013,Panchenko2014}. These works give the modern rigorous form of the Parisi picture, which originated in the physics literature \cite{Parisi1979,MezardParisiVirasoro1987}.

Besides the Parisi variational approach, the Thouless-Anderson-Palmer (TAP) approach \cite{ThoulessAndersonPalmer1977,Plefka1982} offers a different perspective on spin glass theory. The basic idea is to introduce a high dimensional random function, known as the TAP free energy, whose critical points satisfy the celebrated TAP equations. These equations form a system of self-consistency relations for the local magnetizations and include the Onsager reaction term. For the SK model, rigorous TAP related results were treated in \cite{Talagrand2011,Chatterjee2010, Chen2013}, Bolthausen's iterative construction of TAP solutions \cite{Bolthausen2014} suggested an alternative algorithmic perspective instead. For more general mixed $p$-spin models and results beyond high temperature regime, see~\cite{AuffingerJagannath2019, ChenPanchenko2018,ChenTang2024}. It turns out that the TAP equations are closely related to the limiting behavior of the local field. The local field essentially describes the effective interaction experienced by a single spin from its neighbors. Understanding its distribution gives direct information about the structure of the Gibbs measure and is one of the standard routes to deriving TAP equations.

For the classical SK model, the limiting behavior of local fields has been studied extensively. Chatterjee \cite{Chatterjee2010} obtained quantitative results for cavity and local fields through Stein's method. Chen \cite{Chen2013} later established central limit theorems for both cavity and local fields in the high temperature regime by Gaussian interpolation. These results provide a rigorous description of the local field distribution in the SK model. There is also a useful multi-species version of this picture. In the multi-species Sherrington-Kirkpatrick model, the sites are partitioned into species $I_s$, $s\in\mathscr{S}$, with asymptotic proportions $\lambda_s$, and the variance of the coupling between a spin in species $s$ and a spin in species $t$ is encoded by a matrix $\Delta^2=(\Delta_{s,t}^2)_{s,t\in\mathscr{S}}$. The relevant overlap is then a vector $\boldsymbol{q}=(q_s)_{s\in\mathscr{S}}$, and high temperature overlap vector concentration leads to species-wise Onsager corrections \cite{DeyWu2020MSK,Wu2023MSKTAP}.

The Ghatak-Sherrington (GS) model \cite{GhatakSherrington1977} is a multi-spin extension of the SK model in which the spin variables may take multiple values and a crystal field parameter is naturally included. The model has also attracted attention in the physics literature because of its richer phase diagram, including inverse freezing phenomena, see \cite{CrisantiLeuzzi2005} and \cite{SchupperShnerb2005}. Yokota \cite{Yokota1992} proposed TAP equations for the GS model, and Auffinger-Chen \cite{AC21} later gave a rigorous proof in the high temperature regime at arbitrary crystal field. Sheng-Wu \cite{SW24} proved a central limit theorem for the overlap and self-overlap quantities in the same high temperature regime. In a related direction, Sheng \cite{Sheng2024} proved a central limit theorem for the free energy in the mean field GS model at high temperature. 

In this paper, we study the limiting law of local fields for the mean field GS model. Using the cavity method together with the overlap and self-overlap estimates established in Sheng-Wu \cite{SW24}, we obtain a quantitative central limit theorem for the cavity field and further derive the limiting distribution of the local field. Specifically, we prove that the limiting law of the local field in the GS model is a finite mixture of Gaussian distributions which admits an explicit form.

\subsection{Definition and Main Results}

Fix $S\ge 1$, and let $\sigma=(\sigma_1,\ldots,\sigma_N)\in \SigmaNS:=\{0,\pm1,\pm2,\ldots,\pm S\}^N$. The disorder $(g_{ij})_{1\le i<j\le N}$ are independent standard Gaussian random variables. The associated Hamiltonian for the Ghatak-Sherrington model is given as
\begin{equation}\label{eq:Hamiltonian}
  H_N(\sigma)=\frac{\beta}{\sqrt N}\sum_{i<j}g_{ij}\sigma_i\sigma_j
  +D\sum_{i=1}^N\sigma_i^2+h\sum_{i=1}^N\sigma_i,
\end{equation}
where $\beta>0$, $D\in\R$, and $h\in\R$ are respectively the inverse temperature, crystal field, and external field parameters. The free energy is
\begin{equation}\label{eq:free-energy}
  F_{N,S}(\beta,D,h):=\frac1N\log Z_{N,S}(\beta,D,h),
\end{equation}
where the partition function is
\[
  Z_{N,S}(\beta,D,h):=\sum_{\sigma\in\SigmaNS}\exp(H_N(\sigma)).
\]
The Gibbs measure of the GS model is
\begin{equation}\label{eq:gibbs-measure}
  d G(\sigma)=Z_{N,S}(\beta,D,h)^{-1}\exp(H_N(\sigma))\,d\sigma.
\end{equation}
We will use the following notation throughout the text. For $f:\SigmaNS\to\R$, let $\langle f\rangle$ denote the quenched Gibbs average of $f$, i.e. the average with the disorder $(g_{ij})_{i<j}$ fixed. Let $\E$ denote the expectation with respect to the disorder and some other auxiliary Gaussian randomness.

 For different replicas $\sigma^1, \sigma^2 \in \gS_{N,S}$, define the overlap
\begin{equation}\label{eq:overlap-def}
  R_{1,2}:=\frac1N\sum_{i=1}^N\sigma_i^1\sigma_i^2.
\end{equation}
When $\sigma^1=\sigma^2$, the self-overlap is 
\begin{equation}\label{eq:self-overlap-def}
  R_{1,1}:=\frac1N\sum_{i=1}^N(\sigma_i^1)^2.
\end{equation}
 In the Ising SK model, the self overlap reduces to 1, but in the GS model it is configuration dependent and must be handled together with the usual overlap.

We now fix the high-temperature notation used throughout the paper. Let $Z$ be a standard Gaussian random variable. In the high-temperature region of Auffinger-Chen \cite{AC21} and Sheng-Wu \cite{SW24}, let $(p,q)$ denote the unique solution of the self-consistent equations
\begin{align}
  p &= \E_Z\left[\frac{\sum_{m=-S}^{S}m^2\exp(\Dtil m^2+m(\beta\sqrt q Z+h))}
                 {\sum_{m=-S}^{S}\exp(\Dtil m^2+m(\beta\sqrt q Z+h))}\right], \label{eq:p-equation}\\
  q &= \E_Z\left[\left(\frac{\sum_{m=-S}^{S}m\exp(\Dtil m^2+m(\beta\sqrt q Z+h))}
                 {\sum_{m=-S}^{S}\exp(\Dtil m^2+m(\beta\sqrt q Z+h))}\right)^2\right], \label{eq:q-equation}
\end{align}
where
\begin{equation}\label{eq:v-Dtilde}
  v:=p-q, \qquad \Dtil:=D+\frac{\beta^2}{2}(p-q), \qquad Z \sim \cN(0,1).
\end{equation}
Here and below, $\beta_0>0$ denotes a high-temperature constant small enough so that, uniformly for $\beta\le \beta_0$, the uniqueness and local Lipschitz dependence of $(p,q)$ from \cite{AC21} and the quantitative overlap and self-overlap moment estimates from \cite{SW24} hold. In particular, for every fixed $k\ge1$,
\begin{equation}\label{eq:known-overlap-estimate}
  \E\langle |R_{1,2}-q|^{2k}\rangle \le \frac{C}{N^k}, \qquad \E\langle |R_{1,1}-p|^{2k}\rangle\le \frac{C}{N^k}.
\end{equation}
We write
\begin{equation}\label{eq:W-defs}
  W(x):=\sum_{m=-S}^{S}\exp(Dm^2+mx),\qquad
  \Wtil(x):=\sum_{m=-S}^{S}\exp(\Dtil m^2+mx).
\end{equation}
Finally, for $a\in\R$ and $s>0$, $\varphi_{a,s}$ denotes the Gaussian density with mean $a$ and variance $s$.

We also record the function class used throughout the interpolation arguments. A function $U:\R^d\to\R$ is said to have moderate growth if, for every $a>0$,
\begin{equation}\label{eq:moderate-growth}
  \sup_{x\in\R^d}|U(x)|\exp(-a\|x\|^2)<\infty.
\end{equation}
When we say that $U$ is smooth with derivatives of moderate growth, we mean that each partial derivative of $U$ satisfies \eqref{eq:moderate-growth}. This is the same convention used in Chen's paper \cite{Chen2013}.

\subsection{Main results}

We are interested in the behavior of the limiting distribution of the local fields. Before that, we state a central limit theorem for the cavity fields of the GS model, which will be used in proving the law of local fields. Let $J_1,\ldots,J_N$ be i.i.d. standard Gaussian variables independent of the disorder in $H_N$, and define the cavity field and its Gibbs average,
\begin{equation}\label{eq:cavity-field-intro}
  \ell=\frac1{\sqrt N}\sum_{j=1}^NJ_j\sigma_j,
  \qquad
  r=\langle \ell\rangle=\frac1{\sqrt N}\sum_{j=1}^NJ_j\langle\sigma_j\rangle.
\end{equation}

\begin{theorem}[CLT of cavity field]\label{thm:cavity}
Let $\beta_0$ be as above and $k\in\N$. Suppose that $U$ is an infinitely differentiable function on $\R$ whose derivatives of all orders are of moderate growth. Then for $\beta\le \beta_0$ and $D,h\in\R$, we have
\begin{equation}\label{eq:thm-cavity}
  \E\left(\langle U(\ell)\rangle-\int_{\R}U(x)\varphi_{r,p-q}(x)d x\right)^{2k}\le \frac{C}{N^k},
\end{equation}
where $C$ is a constant depending on $k,U,S,D,h$, and the high-temperature bound only. The second subscript in $\varphi_{u,v}$ is the variance, as in the convention above.
\end{theorem}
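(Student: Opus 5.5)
Throughout, $\dot\sigma:=\sigma-\langle\sigma\rangle$, so that $\ell=r+\dot\ell$ with $\dot\ell:=\frac1{\sqrt N}\sum_jJ_j\dot\sigma_j$. We follow Chen's Gaussian interpolation approach for the SK model \cite{Chen2013}, adapted to the GS model by replacing the constant self-overlap $1$ with $R_{1,1}$ and using \eqref{eq:known-overlap-estimate}. The first step is to expand the $2k$-th power into a product over $2k$ independent families of replicas. Set
\[
A:=\langle U(\ell)\rangle-\int U(x)\varphi_{r,p-q}(x)dx .
\]
Then $A^{2k}$ is a polynomial in Gibbs averages over replicas $\sigma^1,\dots,\sigma^{2k}$, all sharing the same $J$ and the same disorder. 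Conditionally on the disorder, $\ell^1,\dots,\ell^{2k}$ are jointly Gaussian in $J$ with covariances $R_{a,b}$. Note that $\int U(x)\varphi_{r,p-q}(x)dx=\E_\xi U(r+\sqrt{p-q}\,\xi)$, where $\xi$ is an independent standard Gaussian.

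The second step is the interpolation. For $t\in[0,1]$ and each replica $a$, put
\[
\ell_t^a:=r+\sqrt t\,\dot\ell^a+\sqrt{1-t}\,\sqrt{p-q}\,\xi_a ,
\]
with i.i.d. $\xi_a$ independent of $J$ and of the disorder. Each factor becomes $\langle U(\ell^a_t)\rangle-\E_{\xi'}U(\ell'_t)$, where $\ell'_t$ is defined the same way from an independent copy $\xi'$. Define
\[
\phi(t):=\E\prod_{a\le 2k}\bigl(\langle U(\ell_t^a)\rangle-\E_{\xi'}U(\ell'_t)\bigr).
\]
At $t=0$ the factors are centered in $\xi_a$, but the $\xi_a$ sit inside the Gibbs average only trivially, so $\phi(0)$ is not zero. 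For this reason it is cleaner to run the argument through a recursion, as in Talagrand's cavity computations. We differentiate $A^{2k}$ using Gaussian integration by parts in $J$ and in $\xi$. Each derivative produces terms in which second derivatives $U''$ are multiplied by covariance differences:
\[
\E_J\dot\ell^a\dot\ell^b-(p-q)\mathbf 1_{a=b}.
\]
For $a=b$ this equals
\[
R_{1,1}-R_{1,2}-(p-q)
\]
(in replica notation, after expanding $\dot\sigma$). For $a\ne b$, since the replicas are independent Gibbs copies, it equals $R_{a,b}$-type quantities minus $\langle R\rangle$, which average out; their fluctuations are of order $(R_{1,2}-q)$ across fresh replicas.

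The third step bounds these terms. The derivative is a sum of terms of the form $\E\langle (R-\text{const})\,\Phi\rangle$, where $\Phi$ contains $2k-1$ or $2k-2$ of the original centered factors times bounded-moment functions (using moderate growth and Gaussian moments of $\ell$). By Hölder's inequality and \eqref{eq:known-overlap-estimate}, each such term is bounded by
\[
C N^{-1/2}\,\bigl(\E A_t^{2k}\bigr)^{(2k-1)/2k}+CN^{-1}\bigl(\E A_t^{2k}\bigr)^{(2k-2)/2k}.
\]
The quadratic term uses that two overlap errors give $N^{-1}$. A Gronwall or "$x\le aN^{-1/2}x^{1-1/2k}+\dots$" argument then yields $\E A^{2k}\le CN^{-k}$.

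The main obstacle is the first-order term. A single factor $(R_{1,1}-p)$ or $(R_{1,2}-q)$ multiplied by $2k-1$ centered factors only gives $N^{-1/2}\cdot(\E A^{2k})^{(2k-1)/2k}$, which suffices by the self-improving inequality. However, the constant must not depend on $t$ through unbounded moments. I would handle this by writing $U^{(j)}(\ell^a_t)$ bounds via moderate growth together with $\E\exp(\epsilon\ell^2)<\infty$ uniformly, which holds since $|\sigma_j|\le S$. Dependence on $D,h$ enters only through \eqref{eq:known-overlap-estimate}.
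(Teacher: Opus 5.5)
There is a genuine gap in your third step. You claim that the terms removing two centered factors come with $N^{-1}$, but nothing in your argument justifies this, and with the bound you can actually prove the argument gives only $\E A^{2k}\le CN^{-k/2}$.

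One Gaussian integration by parts produces terms with exactly \emph{one} fluctuation factor. When two replicas $a\neq b$ are differentiated, that factor is $T_{a,b}=\frac1N\sum_m\dot\sigma^a_m\dot\sigma^b_m$. The term then equals $\E[Z_tA_t^{2k-2}]$ with $Z_t=\E_{\xi_a,\xi_b}\langle T_{a,b}U'(\ell^a_t)U'(\ell^b_t)\rangle$. From \eqref{eq:known-overlap-estimate} and H\"older you only get $\|Z_t\|_{L^k}\le CN^{-1/2}$. So the best available inequality is $|\phi'(t)|\le CN^{-1/2}t^{-1/2}\phi^{1-1/2k}+CN^{-1/2}\phi^{1-1/k}$. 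The second term gives $(\phi^{1/k})'\le CN^{-1/2}$, hence only $\phi(1)\le CN^{-k/2}$.

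This term contains no second overlap error. A second factor appears only after you use that $Z_0=0$ and differentiate once more. Indeed, at $t=0$ replica $a$ enters only through one centered spin, and $\langle\dot\sigma^a_m\rangle_a=0$.

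You also dropped the terms coming from the $J$-dependence of $r$. Integration by parts in $J$ produces $\frac1{2\sqrt t}T_a$ with $T_a=\frac1N\sum_m\dot\sigma^a_m\langle\sigma_m\rangle$. In the cross terms $T_a\,U'(\ell^a_t)\times(\text{replica }b)$, the factor of replica $b$ becomes $\E_{\xi}\langle U'(\ell^b_t)\rangle-\E_{\xi'}U'(r+\sqrt{p-q}\,\xi')$. This quantity is not controlled by $\phi$ at all, so your self-improving inequality does not close even formally. By contrast, the terms that remove one centered factor, which you single out as the main obstacle, are harmless.

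This is why the paper does not attempt a Gronwall closure. It iterates the derivative formula of Lemma~\ref{lem:derivative} to depth $2k+1$ and uses Chen's integral expansion, Lemma~\ref{lem:recursive-expansion}. The essential input is endpoint cancellation. At $t=0$ a term vanishes unless every replica carries at least two $x$-derivatives: zero derivatives kill it by centering in $\xi$, and one derivative kills it by $\langle\dot\sigma^a_m\rangle_a=0$. This forces at least $2k$ fluctuation factors. Consequently:
\begin{itemize}
\item $\psi_{\mathbf s}(0)=0$ for $|\mathbf s|<2k$;
\item the all-zero word of length $2k$ contributes $O(N^{-k})$;
\item the remainder terms carry $2k+1$ factors and are $O(N^{-k-1/2})$.
\end{itemize}

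Finally, contrary to your remark, $\phi(0)=0$. At $t=0$ the factor of replica $a$ is $U(r+\sqrt{p-q}\,\xi_a)-\E_{\xi'}U(r+\sqrt{p-q}\,\xi')$. These factors are independent across $a$ and centered in $\xi_a$, and this vanishing is exactly the starting point of the expansion.
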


For the last site $N$, write the \emph{local field} as
\[
  l_N=\frac1{\sqrt N}\sum_{j<N}g_{jN}\sigma_j.
\]
Now we introduce the limiting law of the local fields, which is a finite mixture of Gaussian distributions:
\begin{equation}\label{eq:nu-def}
  d\nu_N(x):=\sum_{m=-S}^{S}p_m\varphi_{\gamma_N+m\beta(p-q),\,p-q}(x)d x,
\end{equation}
where
\begin{equation}\label{eq:weights-gamma}
  p_m:=\frac{\exp(\Dtil m^2+m(\beta\gamma_N+h))}{\Wtil(\beta\gamma_N+h)},
  \qquad
  \gamma_N:=\langle l_N\rangle-\beta(p-q)\langle\sigma_N\rangle.
\end{equation}

\begin{theorem}[Limiting law of local field]\label{thm:local}
Under the same assumptions as in Theorem~\ref{thm:cavity}, we have
\begin{equation}\label{eq:thm-local}
  \E\left(\langle U(l_N)\rangle-\int_{\R}U(x)d\nu_N(x)\right)^{2k}\le \frac{C}{N^k},
\end{equation}
where $C$ is a finite constant depending on $k,U,S,D,h$, and the high-temperature bound only.
\end{theorem}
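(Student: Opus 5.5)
We prove Theorem~\ref{thm:local} by the cavity method, reducing it to Theorem~\ref{thm:cavity} applied to the system on the first $N-1$ spins. Write $\rho=(\sigma_1,\ldots,\sigma_{N-1})$, $\varepsilon=\sigma_N$, and let $\langle\cdot\rangle_-$ be the Gibbs measure of the $(N-1)$-spin GS model with inverse temperature $\beta_-=\beta\sqrt{(N-1)/N}$. Then $l_N$ is exactly the cavity field $\ell$ of the smaller system, with $J_j=g_{jN}$ and normalization $\sqrt N$ in place of $\sqrt{N-1}$. Moreover
\[
\langle U(l_N)\rangle=\frac{\bigl\langle \sum_{\varepsilon}U(l_N)\exp(\beta\varepsilon l_N+D\varepsilon^2+h\varepsilon)\bigr\rangle_-}{\bigl\langle \sum_{\varepsilon}\exp(\beta\varepsilon l_N+D\varepsilon^2+h\varepsilon)\bigr\rangle_-}=\frac{\langle U(l_N)W(\beta l_N+h)\rangle_-}{\langle W(\beta l_N+h)\rangle_-}.
\]
The functions $x\mapsto U(x)W(\beta x+h)$ and $x\mapsto W(\beta x+h)$ are smooth with all derivatives of moderate growth, since $W$ is a finite sum of exponentials of linear functions. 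Theorem~\ref{thm:cavity} for the $(N-1)$-system therefore applies to both numerator and denominator, with $r_-=\langle l_N\rangle_-$ and variance $p_--q_-$. The rescaling $\beta_-$ versus $\beta$ and $N-1$ versus $N$ changes $(p,q)$ by $O(1/N)$ through the local Lipschitz dependence from \cite{AC21}, and contributes only $O(N^{-1/2})$ errors in $L^{2k}$.

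Next we compute the Gaussian integrals. With $v=p-q$ and $a=r_-$,
\[
\int W(\beta x+h)\varphi_{a,v}(x)\,dx=\sum_m \exp\Bigl(Dm^2+m(\beta a+h)+\tfrac{\beta^2 v}{2}m^2\Bigr)=\Wtil(\beta a+h),
\]
and
\[
\int U(x)W(\beta x+h)\varphi_{a,v}(x)\,dx=\sum_m e^{\Dtil m^2+m(\beta a+h)}\int U(x)\varphi_{a+m\beta v,v}(x)\,dx,
\]
by completing the square. Their ratio is exactly $\int U\,d\nu$ with $\gamma_N$ replaced by $a=\langle l_N\rangle_-$. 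To control the ratio of the two approximations, we note that the denominator $\langle W(\beta l_N+h)\rangle_-\ge 1$, because the $m=0$ term contributes $1$. Writing $A/B-A'/B'=(A-A')/B+A'(B'-B)/(BB')$, we then use Hölder together with moment bounds on $\Wtil(\beta a+h)^{-1}$ and on $\int|U|W\varphi$, which are finite since $a$ is Gaussian given the $(N-1)$-disorder with bounded variance. This gives an $L^{2k}$ error of order $N^{-1/2}$.

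The main obstacle is the final identification of $a=\langle l_N\rangle_-$ with $\gamma_N=\langle l_N\rangle-\beta v\langle\sigma_N\rangle$, which is needed because $\nu_N$ is expressed in terms of the full Gibbs measure. The first step applies the ratio formula with $U(x)=x$ to get $\langle l_N\rangle\approx a+\beta v\,\Wtil'(\beta a+h)/\Wtil(\beta a+h)$. The second step applies it with $U\equiv$ identity on the $\varepsilon$-weight, i.e.
\[
\langle\sigma_N\rangle=\frac{\langle W'(\beta l_N+h)\rangle_-}{\langle W(\beta l_N+h)\rangle_-}\approx \frac{\Wtil'(\beta a+h)}{\Wtil(\beta a+h)},
\]
using the same Gaussian computation. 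Combining the two steps, $\gamma_N-a=O_{L^{2k}}(N^{-1/2})$, with the factor $\beta v$ bounded.

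It then remains to show that $\gamma\mapsto\int U\,d\nu(\gamma)$ is Lipschitz with a polynomially-in-$|\gamma|$ growing constant, which holds because the weights $p_m$ are smooth in $\gamma$ with bounded derivative and $U$ has moderate growth. A mean value theorem, combined with Hölder and Gaussian moment bounds on $a$ and $\gamma_N$, then transfers the estimate from $a$ to $\gamma_N$. Care is needed that every constant depends only on $k,U,S,D,h$ and $\beta_0$, so that the final bound \eqref{eq:thm-local} holds uniformly in $N$.
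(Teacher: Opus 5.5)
Your route is the paper's route. You use the cavity identity $\langle U(l_N)\rangle=\langle U(l_N)W(\beta l_N+h)\rangle_-/\langle W(\beta l_N+h)\rangle_-$, then Theorem~\ref{thm:cavity} on the $(N-1)$-system for numerator and denominator, then a ratio estimate using $B,B'\ge1$ (essentially Proposition~\ref{prop:ratio}), then completing the square. The Onsager estimate $\gamma_N-r_-=O_{L^{2k}}(N^{-1/2})$ comes from the ratio formula applied to $x$ and to $W'$, as in Lemma~\ref{lem:onsager}. Your one-sentence handling of $\beta_-$ versus $\beta$, $v_-$ versus $v$, and the $\sqrt N$ versus $\sqrt{N-1}$ normalization compresses Lemmas~\ref{lem:param-comparison}, \ref{lem:lminus-l} and \ref{lem:denom-compare}--\ref{lem:variance-comparison}, and is fine in substance.

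The step that fails as written is the last one. When the derivatives of $U$ are only of moderate growth, $\Phi(\gamma):=\int U\,d\nu(\gamma)$ need not have a polynomially growing Lipschitz constant. Indeed, $\Phi'(\gamma)$ contains the term $\sum_m p_m(\gamma)\int U'(x)\varphi_{\gamma+m\beta v,v}(x)\,dx$, which for $U(x)=\exp((1+x^2)^{3/4})$ grows like $e^{|\gamma|^{3/2}}$. What is true is that, for every $\epsilon>0$, $|\Phi'(\theta)|\le C_\epsilon\exp\{\epsilon(r_-^2+\gamma_N^2)\}$ for all $\theta$ between $r_-$ and $\gamma_N$. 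Your mean-value/H\"older step therefore needs $\sup_N\E\exp\{a_0(r_-^2+\gamma_N^2)\}<\infty$ for some $a_0>0$; polynomial moments are not enough.

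For $r_-$ this bound is immediate, since $r_-$ is conditionally centered Gaussian with variance at most $S^2$. But $\gamma_N$ contains the full Gibbs average $\langle l_N\rangle$, which is not conditionally Gaussian, so the bound must be proved. The paper does this in Lemma~\ref{lem:center-integrability}. Apply Jensen under the tilted probability measure proportional to $W(\beta l_N+h)\,dG_-$ and use $\langle W(\beta l_N+h)\rangle_-\ge1$ to get $e^{2a_0\langle l_N\rangle^2}\le\langle e^{2a_0l_N^2}W(\beta l_N+h)\rangle_-$. Its expectation is bounded because, given $\sigma^-$, $l_N$ is centered Gaussian with variance at most $S^2$. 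Finally $|\gamma_N-\langle l_N\rangle|\le\beta vS$.

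Adding this ingredient closes your argument. Because you do all parameter replacements at the conditionally Gaussian center $r_-$, this final transfer is the only place you need integrability of $\gamma_N$.
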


The measure $\nu_N$ is random through $\gamma_N$. In contrast to the results in SK model by~\cite{Chen2013,Chatterjee2010}, where the limiting law of local field law is mixture of two Gaussian with symmetric mean, we now have a more involved structure.

\subsection{Structure of the paper}

Section~\ref{sec:cavity} proves the cavity-field central limit theorem. In Section~\ref{sec:local}, we apply the cavity decomposition to the last coordinate, then prove the Onsager correction estimate and identify the Gaussian-mixture limiting law.

\subsection{Acknowledgement}
We would like to express our sincere gratitude to our mentor, Dr. Qiang Wu, for his generous guidance, thoughtful suggestions, and kind support throughout the entire process of this project since fall 2025. 

\section{Proof of Theorem~\ref{thm:cavity}}\label{sec:cavity}

In this section, we first establish concentration bounds for overlap fluctuations in the GS model. These quantities will appear naturally in the Gaussian interpolation procedure. We then introduce an interpolation connecting the GS cavity field to its Gaussian limit and derive derivative formulas for the corresponding interpolating functional. Finally, a recursive integral expansion and endpoint cancellation allow us to prove the central limit theorem.

Let $J_1,\ldots,J_N$ be i.i.d. standard Gaussian variables, independent of the disorder in the $N$-spin Hamiltonian. Using replicas, for $1\le i\le 2k$, set
\begin{equation}\label{eq:ell-i}
  \ell^i=\frac1{\sqrt N}\sum_{j=1}^N J_j\sigma_j^i.
\end{equation}
Set
\begin{equation}\label{eq:r-i}
  r=\langle\ell^i\rangle=\frac1{\sqrt N}\sum_{j=1}^NJ_j\langle\sigma_j\rangle,
\end{equation}
so that
\begin{equation}\label{eq:ell-centered}
  \ell^i-r=\frac1{\sqrt N}\sum_{j=1}^NJ_j(\sigma_j^i-\langle\sigma_j\rangle).
\end{equation}
Since $\ell^i-r$ is a Gaussian linear combination of centered spins, we expect Gaussian fluctuations. To identify the variance, consider $N^{-1}\sum_{j=1}^N(\sigma_j-\langle\sigma_j\rangle)^2$. Expanding and taking expectations gives $\E\langle R_{1,1}\rangle-\E\langle R_{1,2}\rangle=p-q+O(N^{-1/2})$ by \eqref{eq:known-overlap-estimate}, suggesting that $\ell^i-r$ can be approximated by a Gaussian random variable with variance $p-q$.

This motivates approximating $\ell^i-r$ by a Gaussian random variable with variance $p-q$. Let $\xi,\xi^1,\ldots,\xi^{2k}$ be i.i.d. centered Gaussian random variables with variance $p-q$, independent of both $(J_j)_{j\le N}$ and the disorder $(g_{ij})_{i<j\le N}$. For $t\in[0,1]$, define
\begin{equation}\label{eq:ui-def}
  u_i(t)=\sqrt t(\ell^i-r)+\sqrt{1-t}\,\xi^i.
\end{equation}
Suppose that $U$ is a real-valued function defined on $\R$ of moderate growth. Define
\begin{equation}\label{eq:V-def}
  V(x,y)=U(x+y)-\E_{\xi}[U(\xi+y)]
\end{equation}
and
\begin{equation}\label{eq:psi-def}
  \psi(t)=\E\left\langle\prod_{i=1}^{2k}V(u_i(t),r)\right\rangle.
\end{equation}

To study the interpolation function $\psi(t)$, we need to analyze the terms appearing after taking derivatives of $\psi(t)$. These terms involve centered overlap fluctuations. Define centered spins by
\begin{equation}\label{eq:sigma-dot}
  \dot\sigma_j^i=\sigma_j^i-\langle\sigma_j\rangle.
\end{equation}
The fluctuation terms arising from the interpolation can be expressed through the following quantities:
\begin{equation}\label{eq:T-defs}
  T_i=\frac1N\sum_{m\le N}\dot\sigma_m^i\langle\sigma_m\rangle,
  \qquad
  T_{i,i}=\frac1N\sum_{m\le N}(\dot\sigma_m^i)^2-(p-q),
  \qquad
  T_{i,j}=\frac1N\sum_{m\le N}\dot\sigma_m^i\dot\sigma_m^j,\quad i\ne j.
\end{equation}

\begin{lemma}\label{lem:T-estimate}
There exists $\beta_0>0$ such that for $\beta\le\beta_0$ we have
\begin{equation}\label{eq:T-estimate}
  \max_{1\le i,j\le 2k,\,i\ne j}\Big\{\E\langle |T_i|^{2k}\rangle,
  \E\langle |T_{i,i}|^{2k}\rangle,
  \E\langle |T_{i,j}|^{2k}\rangle\Big\}\le \frac{C}{N^k},
\end{equation}
where $C$ is a constant independent of $N$.
\end{lemma}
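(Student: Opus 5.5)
The plan is to write each of $T_i$, $T_{i,i}$, $T_{i,j}$ exactly as a Gibbs average, over auxiliary replicas, of a linear combination of centered overlaps $R_{a,b}-q$ and self-overlaps $R_{a,a}-p$. We then use Jensen's inequality to move the $2k$-th power inside that average and apply \eqref{eq:known-overlap-estimate}. Introduce fresh replicas $\sigma^{a}$, $a>2k$, independent under $\langle\cdot\rangle$. Since $\langle\sigma_m\rangle=\langle\sigma^{a}_m\rangle'$, where $\langle\cdot\rangle'$ averages only over the new replica $a$, we can write
\[
T_i=\Big\langle \tfrac1N\sum_m(\sigma^i_m-\sigma^{a}_m)\sigma^{b}_m\Big\rangle'=\langle R_{i,b}-R_{a,b}\rangle'=\langle (R_{i,b}-q)-(R_{a,b}-q)\rangle',
\]
with $a\ne b$ two new replicas distinct from $i$. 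Similarly, for $i\ne j$,
\[
T_{i,j}=\big\langle R_{i,j}-R_{i,b}-R_{a,j}+R_{a,b}\big\rangle',
\]
with $a,b,i,j$ distinct. The four $q$'s cancel, so $T_{i,j}$ is a signed sum of four terms $R_{\cdot,\cdot}-q$. Finally,
\[
\tfrac1N\sum_m(\dot\sigma^i_m)^2=R_{i,i}-2\langle R_{i,a}\rangle'+\langle R_{a,b}\rangle'.
\]
Subtracting $p-q=p-2q+q$ then gives
\[
T_{i,i}=(R_{i,i}-p)-2\langle R_{i,a}-q\rangle'+\langle R_{a,b}-q\rangle'.
\]

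With these representations in hand, each $T$ is a finite sum, with at most four terms and bounded coefficients, of partial Gibbs averages of centered overlaps. By $(x_1+\dots+x_4)^{2k}\le 4^{2k-1}\sum x_l^{2k}$ and by Jensen's inequality for the conditional average $\langle\cdot\rangle'$, we get
\[
\E\langle|T|^{2k}\rangle\le C_k\sum \E\langle |R_{a,b}-q|^{2k}\rangle+C_k\,\E\langle|R_{i,i}-p|^{2k}\rangle.
\]
Here we use that $\langle\langle\cdot\rangle'\rangle$ is just the full Gibbs average over all replicas. By symmetry among replicas, each term equals $\E\langle|R_{1,2}-q|^{2k}\rangle$ or $\E\langle|R_{1,1}-p|^{2k}\rangle$. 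Both are at most $C/N^k$ by \eqref{eq:known-overlap-estimate}, for $\beta\le\beta_0$. Taking the maximum over the finitely many index pairs $i,j\le 2k$ finishes the argument, with a constant depending only on $k$ and the constants in \eqref{eq:known-overlap-estimate}.

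The only point requiring care is the bookkeeping in the replica representation, specifically that the constants cancel exactly. For $T_i$ no constant appears. For $T_{i,j}$ the $q$'s cancel in pairs. For $T_{i,i}$ the subtraction of $p-q$ is exactly what centers the expression. One must also use enough distinct fresh replicas so that products like $\langle\sigma_m\rangle^2$ become $\langle\sigma^a_m\sigma^b_m\rangle'$ with $a\ne b$, which is an overlap and not a self-overlap. I do not expect any genuine obstacle beyond this: the estimate is a direct corollary of \eqref{eq:known-overlap-estimate}.
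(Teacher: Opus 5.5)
Your proposal is correct and follows the paper's route. Like the paper, you write $T_i$, $T_{i,i}$ and $T_{i,j}$ as partial Gibbs averages, over auxiliary replicas, of centered overlaps $R_{a,b}-q$ and of $R_{i,i}-p$; your identities match \eqref{eq:Ti-expansion}--\eqref{eq:Tij-expansion} up to renaming replicas. You then conclude, as the paper does, with the elementary power inequality, Jensen's inequality for the partial average, replica symmetry, and the estimates \eqref{eq:known-overlap-estimate}.
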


\begin{proof}
For replicas $\sigma^1,\sigma^2,\ldots$, use the overlap notation \eqref{eq:overlap-def}. We use $\langle\cdot\rangle_a$ to denote the Gibbs average with respect to the replica $\sigma^a$, and $\langle\cdot\rangle_{a,b}$ for the Gibbs average with respect to both $\sigma^a$ and $\sigma^b$. For notational convenience, we use replicas $1$ and $2$ as auxiliary replicas in the following computations, and assume that they are distinct from the fixed replicas $i,j$. This is harmless by exchangeability of replicas. For example,
\[
  \langle R_{i,2}\rangle_2=\frac1N\sum_{m=1}^N\sigma_m^i\langle\sigma_m\rangle,
  \qquad
  \langle R_{1,2}\rangle_{1,2}=\frac1N\sum_{m=1}^N\langle\sigma_m\rangle^2.
\]
With this notation, expanding $T_i$ gives
\begin{align}
  T_i&=\frac1N\sum_{m=1}^N(\sigma_m^i-\langle\sigma_m\rangle)\langle\sigma_m\rangle \notag\\
     &=\frac1N\sum_{m=1}^N\sigma_m^i\langle\sigma_m\rangle-\frac1N\sum_{m=1}^N\langle\sigma_m\rangle^2 \notag\\
     &=\langle R_{i,2}\rangle_2-\langle R_{1,2}\rangle_{1,2}
      =\langle R_{i,2}-q\rangle_2-\langle R_{1,2}-q\rangle_{1,2}. \label{eq:Ti-expansion}
\end{align}
Similarly, for $T_{i,i}$, expanding the square gives
\begin{align}
  T_{i,i}&=\frac1N\sum_{m=1}^N(\sigma_m^i-\langle\sigma_m\rangle)^2-(p-q) \notag\\
  &=\frac1N\sum_{m=1}^N\big((\sigma_m^i)^2-2\sigma_m^i\langle\sigma_m\rangle+\langle\sigma_m\rangle^2\big)-(p-q) \notag\\
  &=R_{i,i}-2\langle R_{i,2}\rangle_2+\langle R_{1,2}\rangle_{1,2}-(p-q) \notag\\
  &=(R_{i,i}-p)-2\langle R_{i,2}-q\rangle_2+\langle R_{1,2}-q\rangle_{1,2}. \label{eq:Tii-expansion}
\end{align}
Finally, for $i\ne j$, expanding the product gives
\begin{align}
  T_{i,j}&=\frac1N\sum_{m=1}^N(\sigma_m^i-\langle\sigma_m\rangle)(\sigma_m^j-\langle\sigma_m\rangle) \notag\\
  &=R_{i,j}-\langle R_{i,2}\rangle_2-\langle R_{j,2}\rangle_2+\langle R_{1,2}\rangle_{1,2} \notag\\
  &=(R_{i,j}-q)-\langle R_{i,2}-q\rangle_2-\langle R_{j,2}-q\rangle_2+\langle R_{1,2}-q\rangle_{1,2}. \label{eq:Tij-expansion}
\end{align}
Thus $T_i$ and $T_{i,j}$ are controlled by overlap fluctuations of the form $R_{a,b}-q$, while $T_{i,i}$ also involves the self-overlap fluctuation $R_{a,a}-p$.

By the overlap and self-overlap concentration results established in Sheng-Wu \cite{SW24}, there exists $\beta_0>0$ such that for $\beta\le\beta_0$,
\begin{equation}\label{eq:overlap-concentration}
  \E\langle |R_{1,2}-q|^{2k}\rangle+\E\langle |R_{1,1}-p|^{2k}\rangle\le \frac{C}{N^k}.
\end{equation}
We use Jensen's inequality to remove the additional Gibbs averages. For example,
\[
  \E\left\langle |\langle R_{i,2}-q\rangle_2|^{2k}\right\rangle
  \le \E\left\langle |R_{i,2}-q|^{2k}\right\rangle.
\]
The same argument applies to the terms averaged over replicas $1$ and $2$. Together with the elementary inequality
\[
  |x_1+\cdots+x_m|^{2k}\le m^{2k-1}\sum_{\ell=1}^m|x_\ell|^{2k},
\]
the identities \eqref{eq:Ti-expansion}, \eqref{eq:Tii-expansion}, and \eqref{eq:Tij-expansion}, together with \eqref{eq:overlap-concentration}, imply
\[
  \E\langle |T_i|^{2k}\rangle+\E\langle |T_{i,i}|^{2k}\rangle+\E\langle |T_{i,j}|^{2k}\rangle\le \frac{C}{N^k}.
\]
Taking the maximum over $i,j$ gives the desired bound.
\end{proof}

Lemma~\ref{lem:T-estimate} gives quantitative bounds for the fluctuation terms introduced above. We also need control on functions involving Gaussian random variables and moderate growth functions. The following form of Chen's integrability lemma will be used repeatedly in later arguments.

\begin{lemma}[ \cite{Chen2013}*{Lemma 2.3}]\label{lem:chen-integrability}
Let $I\subset[0,\infty)$ and $J$ be two bounded intervals. Let $g_1,\ldots,g_N$ be independent standard Gaussian variables, and let $\xi\sim N(0,v)$ be independent of them. Let $K_1,K_2,K_3>0$ be constants, and let $\mathcal F$ be a sigma-field independent of the Gaussian family $((g_j)_{j\le N},\xi)$. Assume the following conditions.
\begin{enumerate}[label=(\roman*),leftmargin=2.4em]
\item For each $N\in\N$ and $\beta\in J$, the random variables $\{X^N_{j,\beta}:1\le j\le N\}$ are $\mathcal F$-measurable and satisfy
\[
  |X^N_{j,\beta}|\le K_1\qquad\text{for all }1\le j\le N,\ \beta\in J.
\]
\item The measurable functions $f_1,f_2:\N\times I\times J\to\R$ satisfy, for all $(N,t,\beta)\in\N\times I\times J$,
\[
  |f_1(N,t,\beta)|\le \frac{K_2}{\sqrt N},
  \qquad
  |f_2(N,t,\beta)|\le K_3.
\]
\item The continuous function $U:\R\to\R$ satisfies, for some $A>0$ and
\[
  0<a<\min\left\{\frac1{8K_1^2K_2^2},\frac1{4vK_3^2}\right\},
\]
that
\[
  |U(x)|\le A e^{a x^2}\qquad\text{for all }x\in\R.
\]
\end{enumerate}
Then there exists a constant $K>0$, depending only on $(A,a,K_1,K_2,K_3,v,I,J)$, such that
\begin{equation}\label{eq:chen-integrability}
  \sup_{N\in\N}\sup_{\beta\in J}
  \E_0\left[\sup_{t\in I}\left|U\left(f_1(N,t,\beta)\sum_{j=1}^Ng_jX^N_{j,\beta}+f_2(N,t,\beta)\xi\right)\right|\right]\le K,
\end{equation}
where $\E_0$ denotes conditional expectation with respect to the Gaussian randomness $((g_j)_{j\le N},\xi)$, with $\mathcal F$ fixed. In particular, the bound is uniform in the realization of the additional randomness.
\end{lemma}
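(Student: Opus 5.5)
Since $\mathcal F$ is independent of the Gaussian family, I condition on $\mathcal F$ and treat the $X_j:=X^N_{j,\beta}$ as deterministic numbers with $|X_j|\le K_1$. The argument of $U$ is then a centered Gaussian process in $t$:
\[
Y_t:=f_1(N,t,\beta)\,G+f_2(N,t,\beta)\,\xi,\qquad G:=\sum_{j=1}^N g_jX_j .
\]
The key observation is that the whole family $(Y_t)_{t\in I}$ is driven by only two Gaussian variables, $G$ and $\xi$. We have $G\sim N(0,s^2)$ with $s^2=\sum_j X_j^2\le NK_1^2$, and $\xi\sim N(0,v)$ is independent of $G$. So I never need a chaining or supremum-of-process argument: the supremum over $t$ is handled by a pointwise bound that is uniform in $t$.

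The pointwise bound comes from the growth condition and the elementary inequality $(x+y)^2\le 2x^2+2y^2$. For every $t\in I$,
\[
|U(Y_t)|\le A\exp\bigl(a Y_t^2\bigr)\le A\exp\bigl(2a f_1^2G^2+2a f_2^2\xi^2\bigr)\le A\exp\Bigl(\frac{2aK_2^2}{N}G^2\Bigr)\exp\bigl(2aK_3^2\xi^2\bigr).
\]
The right-hand side does not depend on $t$. Hence $\sup_{t\in I}|U(Y_t)|$ is bounded by the same quantity, and measurability of the supremum is not an issue once we work with this dominating variable. Because $G$ and $\xi$ are independent, $\E_0$ of the right side factorizes.

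Next I evaluate the two Gaussian moment generating functions, using $\E e^{\lambda Z^2}=(1-2\lambda\tau^2)^{-1/2}$ for $Z\sim N(0,\tau^2)$ whenever $2\lambda\tau^2<1$.
\begin{itemize}
\item For the $G$ factor, $\lambda\tau^2=\frac{2aK_2^2}{N}s^2\le 2aK_1^2K_2^2<\frac14$ by the assumption $a<1/(8K_1^2K_2^2)$. So $2\lambda\tau^2<\tfrac12$ and the expectation is at most $\sqrt2$.
\item For the $\xi$ factor, $2\lambda\tau^2=4aK_3^2v<1$ by the assumption $a<1/(4vK_3^2)$. The expectation is therefore $(1-4avK_3^2)^{-1/2}$, which is finite and depends only on $a,v,K_3$.
\end{itemize}
Multiplying the two factors gives
\[
K:=A\sqrt2\,(1-4avK_3^2)^{-1/2}.
\]
This constant is independent of $N$, $\beta$, and the realization of $\mathcal F$, which gives \eqref{eq:chen-integrability}.

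The only delicate point is the supremum over $t$, and the reduction above removes it. The crude inequality $(x+y)^2\le2(x^2+y^2)$ costs a factor of $2$, and this is exactly what the constants $8$ and $4$ in the hypothesis on $a$ absorb. So no finer decoupling is required. If sharper constants were ever needed, one could instead use $(x+y)^2\le(1+\epsilon)x^2+(1+\epsilon^{-1})y^2$, but the stated thresholds already suffice.
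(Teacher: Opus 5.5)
Your proof is correct. Note that the paper gives no proof of this lemma; it quotes it from Chen's Lemma~2.3. Your argument is a valid self-contained proof:
\begin{itemize}
\item the domination $|U(Y_t)|\le A\exp(2aK_2^2G^2/N)\exp(2aK_3^2\xi^2)$ holds uniformly in $t$;
\item conditionally on $\mathcal F$, the variables $G\sim N(0,s^2)$ with $s^2\le NK_1^2$ and $\xi\sim N(0,v)$ are independent;
\item both Gaussian moment generating functions are finite under the stated thresholds, so $K=A\sqrt2\,(1-4avK_3^2)^{-1/2}$ works and is independent of $N$, $\beta$, $I$, $J$ and the realization of $\mathcal F$.
\end{itemize}

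Two small remarks. First, the constants do not match the factor $2$ ``exactly'' as you say. Independence only needs $a<1/(4K_1^2K_2^2)$ for the $G$ factor, so the hypothesis $a<1/(8K_1^2K_2^2)$ leaves slack there; only the $\xi$ threshold is sharp for your argument. Second, domination by an integrable variable does not by itself make $\sup_{t\in I}|U(Y_t)|$ measurable. Continuity of $U$ does, however. For fixed $(G,\xi)$ the map $(x,y)\mapsto|U(xG+y\xi)|$ is continuous on $\R^2$. Hence the supremum over $t$ equals the supremum over a countable dense subset of $\{(f_1(N,t,\beta),f_2(N,t,\beta)):t\in I\}\subset\R^2$, which is a countable supremum of measurable functions.
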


To study the interpolation function, we next derive a conditional derivative formula through Gaussian integration by parts, with the spins fixed. After applying the formula, we average over the product Gibbs measure and over the disorder. The derivative can be expressed in terms of the fluctuation quantities introduced above.

\begin{lemma}[\cite{Chen2013}*{Lemma 2.4}]\label{lem:derivative}
Let $k\in\N$. Suppose that $V_1,\ldots,V_{2k}:\R^2\to\R$ are twice continuously differentiable and of moderate growth, together with their first and second partial derivatives. Define
\[
  \phi(t)=\E_0\left[\prod_{i=1}^{2k}V_i(u_i(t),r)\right],\qquad 0\le t\le 1.
\]
Then $\phi$ is differentiable on $(0,1)$ and
\begin{align}\label{eq:derivative-formula}
\phi'(t)
=&\frac12\sum_{i\le 2k}T_{i,i}\E_0\left[\frac{\partial^2V_i}{\partial x^2}(u_i(t),r)\prod_{\substack{j\le 2k\\ j\ne i}}V_j(u_j(t),r)\right] \notag\\
&+\frac12\sum_{\substack{i,j\le 2k\\ i\ne j}}T_{i,j}\E_0\left[\frac{\partial V_i}{\partial x}(u_i(t),r)\frac{\partial V_j}{\partial x}(u_j(t),r)
\prod_{\substack{h\le 2k\\ h\ne i,j}}V_h(u_h(t),r)\right] \notag\\
&+\frac1{2\sqrt t}\sum_{i\le 2k}T_i\E_0\left[\frac{\partial^2V_i}{\partial x\partial y}(u_i(t),r)\prod_{\substack{j\le 2k\\ j\ne i}}V_j(u_j(t),r)\right] \notag\\
&+\frac1{2\sqrt t}\sum_{\substack{i,j\le 2k\\ i\ne j}}T_i\E_0\left[\frac{\partial V_i}{\partial x}(u_i(t),r)\frac{\partial V_j}{\partial y}(u_j(t),r)
\prod_{\substack{h\le 2k\\ h\ne i,j}}V_h(u_h(t),r)\right].
\end{align}
\end{lemma}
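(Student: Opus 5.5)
We would differentiate $\phi$ directly and then remove the derivative of $u_i$ by Gaussian integration by parts, with the spins $(\sigma^i)_{i\le 2k}$ held fixed. Under $\E_0$, the only randomness is the centered Gaussian family $\big((J_j)_{j\le N},(\xi^i)_{i\le 2k}\big)$, and every argument of the $V_i$ is a linear functional of it.

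\textbf{Step 1: differentiate.} For $t\in(0,1)$,
\[
u_i'(t)=\frac{1}{2\sqrt t}(\ell^i-r)-\frac{1}{2\sqrt{1-t}}\,\xi^i ,
\]
and $r$ does not depend on $t$. Formally this gives
\[
\phi'(t)=\sum_{i\le 2k}\E_0\Big[u_i'(t)\,\frac{\partial V_i}{\partial x}(u_i(t),r)\prod_{j\ne i}V_j(u_j(t),r)\Big].
\]
To justify exchanging $d/dt$ and $\E_0$ on a compact subinterval $I\subset(0,1)$, we use the mean value theorem and dominate $\sup_{t\in I}$ of the integrand.
\begin{itemize}
\item The integrand is a product of moderate-growth functions evaluated at $u_i(t)$, $r$, and the Gaussians $\ell^i-r,\xi^i$.
\item By H\"older's inequality, it suffices to bound each factor in $L^{2k+2}$, uniformly in $t\in I$.
\item Each factor is controlled by Lemma~\ref{lem:chen-integrability}. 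Take $X_j=\dot\sigma_j^i$ or $\langle\sigma_j\rangle$, which are bounded by $2S$, and $f_1=\sqrt t/\sqrt N$, $f_2=\sqrt{1-t}$. Moderate growth lets us choose the exponent $a$ as small as the lemma requires.
\end{itemize}

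\textbf{Step 2: integrate by parts.} We apply the Gaussian integration by parts formula $\E[gF(G)]=\sum_\alpha\E[gG_\alpha]\,\E[\partial_\alpha F(G)]$ with $g=u_i'(t)$. The variables $G$ are $u_1(t),\dots,u_{2k}(t)$ and $r$. The required covariances follow from the independence of $J$ and $\xi$, using $\E_0[J_jJ_{j'}]=\delta_{jj'}$:
\begin{align*}
\E_0[(\ell^i-r)(\ell^j-r)]&=\tfrac1N\textstyle\sum_m\dot\sigma_m^i\dot\sigma_m^j,\qquad
\E_0[\xi^i\xi^j]=(p-q)\delta_{ij},\\
\E_0[(\ell^i-r)\,r]&=\tfrac1N\textstyle\sum_m\dot\sigma_m^i\langle\sigma_m\rangle=T_i .
\end{align*}
Consequently:
\begin{itemize}
\item $\E_0[u_i'(t)u_j(t)]=\tfrac12\big(\tfrac1N\sum_m\dot\sigma^i_m\dot\sigma^j_m-(p-q)\delta_{ij}\big)$. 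This equals $\tfrac12T_{i,i}$ when $j=i$ and $\tfrac12T_{i,j}$ when $j\ne i$. The factors $\sqrt t$ and $\sqrt{1-t}$ cancel exactly, and the subtraction of $p-q$ coming from $\xi^i$ produces the centering in $T_{i,i}$.
\item $\E_0[u_i'(t)\,r]=\tfrac1{2\sqrt t}T_i$.
\end{itemize}

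\textbf{Step 3: collect terms.} The derivative of $\partial_xV_i(u_i,r)\prod_{j\ne i}V_j$ with respect to $u_i$ produces $\partial_x^2V_i$, which is the first line of \eqref{eq:derivative-formula}. Its derivative with respect to $u_j$, $j\ne i$, produces $\partial_xV_i\,\partial_xV_j$, which is the second line. Its derivative with respect to $r$ produces two kinds of terms. From the $i$-th factor we get $\partial_x\partial_yV_i$, the third line. From the other factors we get $\partial_xV_i\,\partial_yV_j$, the fourth line. This gives exactly \eqref{eq:derivative-formula}.

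\textbf{Main obstacle.} The delicate part is the analytic justification, not the algebra. We must check two things: the integration by parts formula applies to a $C^1$ function of moderate growth (we use $C^2$ of the $V_i$ only because the result involves second derivatives), and differentiation under $\E_0$ is legitimate. The $1/\sqrt t$ and $1/\sqrt{1-t}$ singularities are harmless because we only claim differentiability on the open interval $(0,1)$ and work on compact subintervals. Uniform integrability then comes from Lemma~\ref{lem:chen-integrability} combined with H\"older's inequality, choosing the moderate-growth exponent small relative to $K_1=2S$, $K_2=1$, $K_3=1$ and $v=p-q$.
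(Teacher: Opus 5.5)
Your proposal is correct and follows the route the paper indicates: the paper does not reprove this lemma but cites Chen's Lemma~2.4, describing it as Gaussian integration by parts with the spins fixed. Your covariances $\E_0[u_i'(t)u_j(t)]=\tfrac12T_{i,i}$ or $\tfrac12T_{i,j}$ and $\E_0[u_i'(t)\,r]=T_i/(2\sqrt t)$ are right, the four lines of \eqref{eq:derivative-formula} come out exactly as you collect them, and your domination step is sound. For fixed $N$ a direct moderate-growth bound on the finite-dimensional Gaussian vector would already suffice, so the uniformity in $N$ supplied by Lemma~\ref{lem:chen-integrability} is harmless but not needed.
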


\begin{remark}
The moment bounds in Lemma~\ref{lem:T-estimate} will be repeatedly used in controlling the derivatives of the interpolation function.
\end{remark}

Lemma~\ref{lem:derivative} will be applied recursively. The singular factors $t^{-1/2}$ prevent a direct Taylor expansion at $t=0$. We therefore use the following integral expansion, which is the form of Chen's Lemma~2.7 needed here.

\begin{lemma}[\cite{Chen2013}*{Lemma~2.7}]\label{lem:recursive-expansion}
Fix an integer $m\ge1$. Let $\psi$ be continuous on $[0,1]$ and differentiable on $(0,1)$. For a binary word $\mathbf s=(s_1,\ldots,s_n)\in\{0,1\}^n$, let $\psi_{\mathbf s}$ be continuous on $[0,1]$ and differentiable on $(0,1)$. Suppose that all the expressions below are integrable at zero, that $\psi(0)=0$, and that
\begin{equation}\label{eq:recursive-derivatives}
  \psi'(t)=\frac12\psi_{(0)}(t)+\frac1{2\sqrt t}\psi_{(1)}(t),
  \qquad
  \psi_{\mathbf s}'(t)=\frac12\psi_{(\mathbf s,0)}(t)
  +\frac1{2\sqrt t}\psi_{(\mathbf s,1)}(t)
\end{equation}
for $t\in(0,1)$ and $1\le |\mathbf s|\le m$. If
\[
  \psi_{\mathbf s}(0)=0,\qquad 1\le |\mathbf s|<m,
\]
then, writing
\[
  \Delta_n(t):=\{(t_1,\ldots,t_n):0<t_n<\cdots<t_1<t\},
\]
we have
\begin{align}
\psi(t)
={}&\frac1{2^m}\sum_{\mathbf s\in\{0,1\}^m}
\int_{\Delta_m(t)}
\left(\prod_{j=1}^m t_j^{-s_j/2}\right)\psi_{\mathbf s}(0)\,d\mathbf t \label{eq:recursive-expansion}\\
&+\frac1{2^{m+1}}\sum_{\mathbf s\in\{0,1\}^{m+1}}
\int_{\Delta_{m+1}(t)}
\left(\prod_{j=1}^{m+1}t_j^{-s_j/2}\right)
\psi_{\mathbf s}(t_{m+1})\,d\mathbf t.\notag
\end{align}
\end{lemma}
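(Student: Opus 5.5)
The expansion follows by iterating the fundamental theorem of calculus along the binary tree of derivatives in \eqref{eq:recursive-derivatives}, and the cleanest route is induction on $m$. I will in fact prove, for every $1\le n\le m$ (using only the hypotheses available at that depth), the identity
\begin{equation*}
\psi(t)=\frac1{2^{n}}\sum_{\mathbf s\in\{0,1\}^{n}}\int_{\Delta_{n}(t)}\Big(\prod_{j=1}^{n}t_j^{-s_j/2}\Big)\psi_{\mathbf s}(t_{n})\,d\mathbf t ,
\end{equation*}
which I call $(E_n)$. The claimed formula is then $(E_{m+1})$ after splitting off the vanishing boundary terms, and this deduction is set up in the final step below.

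\textbf{Base case $n=1$.} Since $\psi(0)=0$ and $\psi$ is continuous on $[0,1]$ and differentiable on $(0,1)$, we have $\psi(t)=\int_0^t\psi'(t_1)\,dt_1$ as an improper integral at $0$. Substituting $\psi'(t_1)=\tfrac12\psi_{(0)}(t_1)+\tfrac1{2\sqrt{t_1}}\psi_{(1)}(t_1)$ gives $(E_1)$ directly. The assumed integrability at zero makes each of the two pieces separately integrable.

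\textbf{Inductive step.} Assume $(E_n)$ holds for some $n\le m$ and fix a word $\mathbf s$ with $|\mathbf s|=n$. For $0<t_{n}<t_{n-1}$, write
\[
\psi_{\mathbf s}(t_n)=\psi_{\mathbf s}(0)+\int_0^{t_n}\psi'_{\mathbf s}(t_{n+1})\,dt_{n+1},
\]
again by the fundamental theorem of calculus, using continuity at $0$ and integrability of the derivative. Then substitute $\psi'_{\mathbf s}=\tfrac12\psi_{(\mathbf s,0)}+\tfrac1{2\sqrt{\cdot}}\psi_{(\mathbf s,1)}$. Inserting this into $(E_n)$ and using $\Delta_{n+1}(t)=\{(\mathbf t,t_{n+1}):\mathbf t\in\Delta_n(t),\,0<t_{n+1}<t_n\}$ splits the right side into two parts:
\begin{itemize}
\item a boundary part $\frac1{2^n}\sum_{|\mathbf s|=n}\int_{\Delta_n(t)}\prod_j t_j^{-s_j/2}\,\psi_{\mathbf s}(0)\,d\mathbf t$;
\item a new main part, which is exactly the right side of $(E_{n+1})$.
\end{itemize}
For $n<m$ the boundary part vanishes because $\psi_{\mathbf s}(0)=0$ when $|\mathbf s|<m$, so $(E_{n+1})$ holds.

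\textbf{Final step $n=m$.} At this level the boundary term does not vanish. It is precisely the first line of \eqref{eq:recursive-expansion}, and the main part is the second line, which proves the lemma.

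\textbf{Main obstacle.} The only delicate point is justifying the interchange and splitting of the iterated improper integrals near $0$, where the weights $t_j^{-1/2}$ are singular.
\begin{itemize}
\item The weights themselves are harmless: $\prod_j t_j^{-1/2}$ is integrable on $\Delta_n(t)$, since iterating gives terms like $\int_0^{t_{j-1}} t_j^{-1/2}\,dt_j\lesssim t_{j-1}^{1/2}$.
\item The $\psi_{\mathbf s}$ are continuous on $[0,1]$, hence bounded, so all integrands in $(E_n)$ are absolutely integrable.
\item The step needing the hypothesis ``all expressions integrable at zero'' is the representation $\psi_{\mathbf s}(t_n)-\psi_{\mathbf s}(0)=\int_0^{t_n}\psi_{\mathbf s}'$. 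There I first integrate from $\varepsilon$ to $t_n$ and let $\varepsilon\to0$, using continuity of $\psi_{\mathbf s}$ at $0$. Absolute integrability then lets Fubini place the new variable innermost.
\end{itemize}
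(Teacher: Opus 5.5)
Your proposal is correct and is essentially the paper's proof: integrate the first identity in \eqref{eq:recursive-derivatives} from $0$, then iterate the second identity, whose endpoint terms vanish up to depth $m$, and expand once more at depth $m$, where the nonvanishing endpoint terms $\psi_{\mathbf s}(0)$, $|\mathbf s|=m$, produce the first line of \eqref{eq:recursive-expansion}. The only blemish is your opening claim that the result is ``$(E_{m+1})$ after splitting off the vanishing boundary terms'': the depth-$m$ boundary term does not vanish, so $(E_{m+1})$ is not what holds, but your final step states this correctly.
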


\begin{proof}
Integrate the first identity in \eqref{eq:recursive-derivatives} from $\varepsilon$ to $t$ and then let $\varepsilon\downarrow0$. Continuity at zero and integrability of the right-hand side give the assertion after one step. At each of the next $m-1$ steps, the endpoint term vanishes by the assumption $\psi_{\mathbf s}(0)=0$ for $|\mathbf s|<m$. Iterating the second identity therefore gives, by induction,
\[
  \psi(t)=\frac1{2^m}\sum_{\mathbf s\in\{0,1\}^m}
  \int_{\Delta_m(t)}\left(\prod_{j=1}^m t_j^{-s_j/2}\right)
  \psi_{\mathbf s}(t_m)\,d\mathbf t.
\]
For each word of length $m$, write
\[
  \psi_{\mathbf s}(t_m)=\psi_{\mathbf s}(0)
  +\int_0^{t_m}\psi_{\mathbf s}'(t_{m+1})\,dt_{m+1}
\]
and use \eqref{eq:recursive-derivatives} once more. This gives \eqref{eq:recursive-expansion}. The factors $t_j^{-1/2}$ are integrable at zero, so all the displayed integrals are finite under the stated assumptions. The identity at $t=1$ follows by continuity.
\end{proof}

We are now ready to prove Theorem~\ref{thm:cavity}.

\begin{proof}[Proof of Theorem~\ref{thm:cavity}]
Recall the definitions of $V$ and $\psi$. By Lemma~\ref{lem:derivative}, after averaging over the replicas and the disorder, there are functions $\psi_{(0)}$ and $\psi_{(1)}$ such that
\[
  \psi'(t)=\frac12\psi_{(0)}(t)+\frac1{2\sqrt t}\psi_{(1)}(t).
\]
Applying the same derivative formula to every term that occurs, and continuing recursively, defines functions $\psi_{\mathbf s}$ for binary words $\mathbf s$ of length at most $2k+1$ satisfying \eqref{eq:recursive-derivatives}. Differentiation under the outer $\E\langle\cdot\rangle$ is justified by the same domination argument as in \cite{Chen2013}*{Lemma~2.4}, using Lemma~\ref{lem:chen-integrability}. Lemma~\ref{lem:chen-integrability} and the moderate-growth assumptions imply that these functions have continuous extensions to $t=0$ and that the singular terms are integrable there. Moreover, $\psi(0)=0$, since at $t=0$ the variables $\xi^1,\ldots,\xi^{2k}$ are independent and each factor $V(\xi^i,r)$ has zero $\xi^i$-mean.

We record the structure of the recursively generated terms. If $|\mathbf s|=n$, every summand in $\psi_{\mathbf s}(t)$ is a constant multiple of
\begin{equation}\label{eq:typical-recursive-term}
  \E\left\langle
  \left(\prod_{i\le2k}T_{i,i}^{a_i}\right)
  \Bigg(\prod_{\substack{i,j\le2k\\i\ne j}}T_{i,j}^{b_{ij}}\Bigg)
  \left(\prod_{i\le2k}T_i^{c_i}\right)
  \E_0\left[\prod_{i\le2k}
  \frac{\partial^{d_i+e_i}V}{\partial x^{d_i}\partial y^{e_i}}(u_i(t),r)
  \right]\right\rangle,
\end{equation}
where the nonnegative integers satisfy
\begin{align}
  & \sum_i a_i+\sum_{i\ne j}b_{ij}+\sum_i c_i=n, \label{eq:recursive-count-1}\\
  & d_i=2a_i+\sum_{j\ne i}(b_{ij}+b_{ji})+c_i, \qquad i\le2k, \label{eq:recursive-count-2}\\
  & \sum_i c_i=\sum_{j=1}^n s_j=\sum_i e_i. \label{eq:recursive-count-3}
\end{align}
Indeed, a regular term in Lemma~\ref{lem:derivative} adds either one $T_{i,i}$ and two $x$-derivatives, or one $T_{i,j}$ and one $x$-derivative at each of $i,j$. A singular term adds one $T_i$, one $x$-derivative at $i$, and one $y$-derivative. Thus \eqref{eq:recursive-count-1}-\eqref{eq:recursive-count-3} follow by induction.

We next prove the endpoint cancellation. At $t=0$, $u_i(0)=\xi^i$, and the variables $\xi^1,\ldots,\xi^{2k}$ are independent. If $d_i=0$ for some $i$, then
\[
  \E_{\xi^i}\left[\frac{\partial^{e_i}V}{\partial y^{e_i}}(\xi^i,r)\right]=0
\]
by the definition of $V$, so the term \eqref{eq:typical-recursive-term} vanishes. On the other hand, if $d_i=1$, then no factor $T_{i,i}$ is present and the label $i$ occurs in exactly one factor of type $T_i$, $T_{i,j}$, or $T_{j,i}$. Expanding that factor leaves a single centered spin with replica label $i$, so averaging over replica $i$ makes the term vanish because $\langle\dot\sigma_j^i\rangle_i=0$. Consequently, a nonzero endpoint term must satisfy $d_i\ge2$ for every $i\le2k$. From \eqref{eq:recursive-count-1} and \eqref{eq:recursive-count-2},
\[
  2n=\sum_{i\le2k}d_i+\sum_{i\le2k}c_i\ge4k.
\]
It follows that
\begin{equation}\label{eq:endpoint-cancellation}
  \psi_{\mathbf s}(0)=0,\qquad |\mathbf s|<2k.
\end{equation}

If $n=2k$ and an endpoint term is nonzero, equality must hold throughout the preceding display. Hence $d_i=2$ for every $i$, all $c_i$ and $e_i$ vanish, and $\mathbf s=(0,\ldots,0)$. Thus every nonzero term in $\psi_{(0,\ldots,0)}(0)$ contains exactly $2k$ factors from the family $T_i,T_{i,i},T_{i,j}$, and its Gaussian factor is a product of $U''(\xi^i+r)$. More generally, moderate growth bounds every Gaussian factor at the finite recursion depths used here by $C_a\exp\{a\sum_i(u_i(t)^2+r^2)\}$, Lemma~\ref{lem:chen-integrability} and H\"older's inequality give a uniform conditional bound. A further application of H\"older's inequality and Lemma~\ref{lem:T-estimate} gives
\begin{equation}\label{eq:leading-recursive-bound}
  |\psi_{(0,\ldots,0)}(0)|\le \frac{C}{N^k}.
\end{equation}
Likewise, every summand in a function $\psi_{\mathbf s}$ with $|\mathbf s|=2k+1$ contains $2k+1$ fluctuation factors. Applying Lemma~\ref{lem:T-estimate} with moment order $4k+2$ and H\"older's inequality yields
\begin{equation}\label{eq:remainder-recursive-bound}
  \sup_{t\in[0,1]}\max_{|\mathbf s|=2k+1}|\psi_{\mathbf s}(t)|
  \le \frac{C}{N^{k+1/2}}.
\end{equation}
The constants in these estimates also include the uniform Gaussian bounds from Lemma~\ref{lem:chen-integrability}, only finitely many derivatives of $U$ occur for fixed $k$.

Apply Lemma~\ref{lem:recursive-expansion} with $m=2k$ and $t=1$. By \eqref{eq:endpoint-cancellation}, the first part of \eqref{eq:recursive-expansion} has only the all-zero word, whose simplex integral equals $1/(2k)!$. Every kernel in the remainder is integrable, since each exponent is either $0$ or $-1/2$. Therefore \eqref{eq:leading-recursive-bound} and \eqref{eq:remainder-recursive-bound} give
\[
  |\psi(1)|\le \frac{C}{N^k}.
\]
Finally, at $t=1$, $u_i(1)=\ell^i-r$, and independence of the replicas gives
\[
  \psi(1)=\E\big(\langle U(\ell)\rangle-\E_\xi[U(\xi+r)]\big)^{2k}.
\]
Since $\xi+r$ is Gaussian with mean $r$ and variance $p-q$,
\[
  \E_\xi[U(\xi+r)]=\int_\R U(x)\varphi_{r,p-q}(x)\,dx.
\]
This proves Theorem~\ref{thm:cavity}.
\end{proof}

\section{Proof of Theorem~\ref{thm:local}}\label{sec:local}

Before presenting the proof details of Theorem~\ref{thm:local}, we need to set up some necessary notations. Recall $\beta_0$ obtained from the overlap concentration results in Lemma~\ref{lem:T-estimate}. In the cavity approach, we decompose the original Hamiltonian as follows:
\[
  H_N(\sigma)=H_{N-1}(\sigma^-)+\beta\sigma_N l_N+D\sigma_N^2+h\sigma_N,
\]
where
\begin{equation}\label{eq:HNminus}
  H_{N-1}(\sigma^-):=\frac{\beta}{\sqrt N}\sum_{i<j<N}g_{ij}\sigma_i\sigma_j+D\sum_{i=1}^{N-1}\sigma_i^2+h\sum_{i=1}^{N-1}\sigma_i,
  \qquad
  l_N:=\frac1{\sqrt N}\sum_{j<N}g_{jN}\sigma_j.
\end{equation}
For convenience we will use the following notation from now on:
\begin{equation}\label{eq:minus-notation}
  \gamma=\gamma_N,\qquad l=l_N,
  \qquad
  l_-:=\frac1{\sqrt{N-1}}\sum_{j<N}g_{jN}\sigma_j=\sqrt{\frac{N}{N-1}}\,l,
  \qquad
  r_-:=\langle l_-\rangle_-.
\end{equation}
We also define
\begin{equation}\label{eq:beta-minus}
  \beta_-:=\beta\sqrt{\frac{N-1}{N}},
\end{equation}
and note that $\beta_-l_-=\beta l$. The Gibbs average $\langle\cdot\rangle_-$ is with respect to $H_{N-1}(\sigma^-)$.

We will use the following notations, equivalent to the hyperbolic-cosine form by symmetry of the spin set:
\begin{equation}\label{eq:W-section3}
  W(x):=\sum_{m=-S}^{S}e^{Dm^2+mx},
  \qquad
  \Dtil:=D+\frac{\beta^2}{2}(p-q),
  \qquad
  \Wtil(x):=\sum_{m=-S}^{S}e^{\Dtil m^2+mx}.
\end{equation}
While applying the cavity method, for the size $N-1$ system, let
\begin{equation}\label{eq:minus-params}
  \Dtil_-:=D+\frac{\beta_-^2}{2}(p_- -q_-),
  \qquad
  \Wtil_-(x):=\sum_{m=-S}^{S}e^{\Dtil_-m^2+mx}.
\end{equation}
Here $p_-,q_-$ are the solution to the fixed point equations obtained by replacing $\beta$ by $\beta_-$ in \eqref{eq:p-equation} and \eqref{eq:q-equation}. Set $v_-=p_- -q_-$. In the intermediate estimates below, $\E_{\xi_-}$ denotes expectation with respect to a centered Gaussian variable $\xi_-$ of variance $v_-$. To keep notation light, some displays write $\E_\xi$ in place of $\E_{\xi_-}$ until the final variance comparison step.

\begin{lemma}\label{lem:param-comparison}
There exists a positive constant $C$ independent of $N$ such that
\begin{equation}\label{eq:param-comparison}
  |\beta-\beta_-|,  \ |p-p_-|, \ |q-q_-|, \ |\Dtil-\Dtil_-|, \ |v-v_-|\le \frac{C}{N}.
\end{equation}
\end{lemma}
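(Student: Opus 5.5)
The plan is to get everything from the local Lipschitz dependence of $(p,q)$ on $\beta$, which the paper already takes from \cite{AC21}, applied uniformly for $\beta\le\beta_0$.

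\emph{Step 1: the bound on $\beta-\beta_-$.} This is elementary:
\[
  \beta-\beta_-=\beta\Big(1-\sqrt{1-\tfrac1N}\Big)=\frac{\beta/N}{1+\sqrt{1-1/N}}\le\frac{\beta_0}{N}.
\]
Moreover, $\beta_-<\beta\le\beta_0$, so $\beta_-$ also lies in the high-temperature region. Hence $(p_-,q_-)$ is the unique solution of \eqref{eq:p-equation}--\eqref{eq:q-equation} at inverse temperature $\beta_-$, with the same $D,h,S$.

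\emph{Step 2: the bounds on $p-p_-$ and $q-q_-$.} View $(p,q)=(p(\beta),q(\beta))$ as the fixed point of the map
\[
  \Phi_\beta(p,q)=\big(\E_Z[\langle m^2\rangle_{p,q,\beta}],\ \E_Z[\langle m\rangle_{p,q,\beta}^2]\big),
\]
where $\langle\cdot\rangle_{p,q,\beta}$ is the single-site measure on $\{-S,\dots,S\}$ with weights $\exp(\Dtil m^2+m(\beta\sqrt qZ+h))$ and $\Dtil=D+\beta^2(p-q)/2$. By the local Lipschitz dependence from \cite{AC21}, $|p(\beta)-p(\beta')|+|q(\beta)-q(\beta')|\le L|\beta-\beta'|$ for $\beta,\beta'\le\beta_0$, with $L$ depending only on $S,D,h,\beta_0$. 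Combined with Step 1, this gives $|p-p_-|,|q-q_-|\le C/N$.

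If one wants this derived rather than cited, I would argue as follows. For small $\beta_0$, $\Phi_\beta$ is a contraction on the compact set $[0,S^2]^2$, because its partial derivatives in $(p,q)$ carry factors of $\beta^2$ or $\beta$. The single-site moments are bounded by powers of $S$. The possible singularity of $\sqrt q$ at $q=0$ is handled by Gaussian integration by parts in $Z$: writing $\partial_q\E_Z f(\beta\sqrt q Z)=\tfrac{\beta^2}{2}\E_Z f''$ removes the $q^{-1/2}$. Also $\partial_\beta\Phi_\beta$ is uniformly bounded. The standard fixed-point perturbation bound is then
\[
  \|x_\beta-x_{\beta'}\|\le(1-\kappa)^{-1}\sup\|\Phi_\beta-\Phi_{\beta'}\|\le C|\beta-\beta'|.
\]
Verifying the uniform contraction and derivative bounds near $q=0$ is the only delicate point, and I expect the cited Lipschitz property to cover it.

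\emph{Step 3: the bounds on $v-v_-$ and $\Dtil-\Dtil_-$.} For $v$, the triangle inequality gives $|v-v_-|\le|p-p_-|+|q-q_-|\le C/N$. For $\Dtil$,
\[
  \Dtil-\Dtil_-=\tfrac12\big[(\beta^2-\beta_-^2)v+\beta_-^2(v-v_-)\big].
\]
Here $\beta^2-\beta_-^2=\beta^2/N$, and $v,\beta_-\le S^2,\beta_0$ respectively, so $|\Dtil-\Dtil_-|\le C/N$. The constants depend only on $S,D,h,\beta_0$.
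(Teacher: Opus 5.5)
Your proof is correct and follows the paper's argument. The paper uses the same three steps: the elementary bound on $\beta-\beta_-$, then the local Lipschitz dependence of $(p,q)$ on $\beta$ cited from \cite{AC21}, then the triangle inequality for $v$ and $\Dtil$. Your optional contraction-mapping sketch (with Gaussian integration by parts removing the $q^{-1/2}$ singularity) and your explicit decomposition $\Dtil-\Dtil_-=\tfrac12\big[(\beta^2-\beta_-^2)v+\beta_-^2(v-v_-)\big]$ only spell out details that the paper leaves to the citation and to boundedness of $p,q$.
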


\begin{proof}
The estimate for $\beta$ follows from
\[
  \beta-\beta_- = \beta\left(1-\sqrt{1-\frac1N}\right)=O(N^{-1}).
\]
In the high-temperature region, the fixed point $(p(\beta),q(\beta))$ is unique and locally Lipschitz in $\beta$ by the implicit-function argument used in Auffinger-Chen \cite{AC21}. Therefore
\[
  |p-p_-|+|q-q_-|\le C|\beta-\beta_-|\le \frac{C}{N}.
\]
The estimate for $v-v_-$ follows immediately. Finally,
\[
  |\Dtil-\Dtil_-|=\left|\frac{\beta^2}{2}(p-q)-\frac{\beta_-^2}{2}(p_- -q_-)\right|\le \frac{C}{N},
\]
using boundedness of $p,q,p_-,q_-$ by $S^2$.
\end{proof}

\begin{proposition}\label{prop:ratio}
Let $\beta\le \beta_0$ and $h,D\in\R$. Let $U:\R\to\R$ be smooth, with derivatives of moderate growth. For a cavity field $l$ and $r=\langle l\rangle$, with Gaussian reference variance $p-q$, we have, for every positive integer $k$,
\begin{equation}\label{eq:ratio-estimate}
  \E\left[\frac{\langle U(l)W(\beta l+h)\rangle}{\langle W(\beta l+h)\rangle}
  -\frac{\E_\xi[U(\xi+r)W(\beta(\xi+r)+h)]}{\Wtil(\beta r+h)}\right]^{2k}\le \frac{K}{N^k}.
\end{equation}
Here $K$ is uniform over the system size and over $0<\beta\le\beta_0$, by the uniformity of \eqref{eq:known-overlap-estimate}.
\end{proposition}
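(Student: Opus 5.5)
We reduce the ratio estimate to two applications of Theorem~\ref{thm:cavity}, one for the numerator and one for the denominator, and then combine them with an elementary bound on quotients. Set
\[
A:=\langle U(l)W(\beta l+h)\rangle,\quad B:=\langle W(\beta l+h)\rangle,\quad
A_0:=\E_\xi[U(\xi+r)W(\beta(\xi+r)+h)],\quad B_0:=\E_\xi[W(\beta(\xi+r)+h)].
\]
Here $\xi\sim\cN(0,p-q)$ and $l$ is a cavity field of the form \eqref{eq:cavity-field-intro}.

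\textbf{Computing $B_0$.} A direct Gaussian computation, using $\E e^{m\beta\xi}=e^{m^2\beta^2(p-q)/2}$, gives
\[
B_0=\sum_m e^{Dm^2+m(\beta r+h)}e^{\beta^2m^2(p-q)/2}=\Wtil(\beta r+h).
\]
So the target expression equals $A/B-A_0/B_0$.

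\textbf{Applying Theorem~\ref{thm:cavity}.} The functions $x\mapsto U(x)W(\beta x+h)$ and $x\mapsto W(\beta x+h)$ are smooth. Their derivatives are bounded by finite sums of terms of the form $e^{S\beta|x|}$ times derivatives of $U$, which are of moderate growth. Since $\beta\le\beta_0$, $|h|$ and $D$ are fixed, and $e^{c|x|}\le C_ae^{ax^2}$ for every $a>0$, the constants are uniform. Theorem~\ref{thm:cavity} then gives, for every $k$,
\[
\E(A-A_0)^{2k}\le C/N^k,\qquad \E(B-B_0)^{2k}\le C/N^k .
\]

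\textbf{Combining via the quotient identity.} Write
\[
\frac AB-\frac{A_0}{B_0}=\frac{A-A_0}{B}+\frac{A_0(B_0-B)}{BB_0}.
\]
Hence it suffices to control negative moments of $B$ and $B_0$ and positive moments of $A_0$. Then Hölder's inequality and the two bounds above (used with a larger moment order, say $4k$) give $K/N^k$. The lower bounds are easy:
\begin{itemize}
\item $W\ge1$ because of the $m=0$ term, so $B\ge1$ and $B_0\ge1$ deterministically.
\item $|A_0|\le C\,\E_\xi[e^{a(\xi+r)^2}e^{S\beta|\xi+r|}]$ for a small $a$. Since $r=N^{-1/2}\sum_jJ_j\langle\sigma_j\rangle$ with $|\langle\sigma_j\rangle|\le S$, Lemma~\ref{lem:chen-integrability} (with $f_1=N^{-1/2}$, $f_2=1$, $K_1=S$) gives $\E|A_0|^{q}\le C$ for each fixed $q$, once $a$ is chosen small enough depending on $q$.
\end{itemize}

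\textbf{Main obstacle.} The only real care is verifying the moderate-growth and integrability hypotheses uniformly in $\beta\le\beta_0$ and $N$. In particular, Theorem~\ref{thm:cavity} must be applied to the $\beta$-dependent test function $U(x)W(\beta x+h)$, so its constant must be uniform over this family. We would check this by noting that the proof of Theorem~\ref{thm:cavity} uses only bounds of the form $|\partial^jU_\beta(x)|\le C_ae^{ax^2}$ with $C_a$ uniform in $\beta\le\beta_0$, and those bounds hold here.
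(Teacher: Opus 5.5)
Your proposal is correct and follows essentially the paper's route: Theorem~\ref{thm:cavity} for the numerator and denominator, the same quotient decomposition, the deterministic lower bound $W\ge1$ from the $m=0$ term, and H\"older's inequality combined with Lemma~\ref{lem:chen-integrability} for the moments of $A_0$. The differences are only streamlining. You apply Theorem~\ref{thm:cavity} directly to $U(x)W(\beta x+h)$ and $W(\beta x+h)$, where the paper applies it to each $U(x)e^{m\beta x}$ and $e^{m\beta x}$ separately. You also use only $B,B_0\ge1$, where the paper uses the $m$-wise Jensen bound $B\ge e^{mh+Dm^2+m\beta r}$, so you do not need the exponential moments $\E e^{-4km\beta r}$.
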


\begin{proof}
Define for $m\in\{-S,\ldots,S\}$
\[
  A(m)=\langle U(l)e^{m\beta l}\rangle-\E_\xi[U(\xi+r)e^{m\beta(\xi+r)}],
\]
\[
  B(m)=\langle e^{m\beta l}\rangle-\E_\xi[e^{m\beta(\xi+r)}].
\]
By Theorem~\ref{thm:cavity} applied to $F(x)=U(x)e^{m\beta x}$ and $F(x)=e^{m\beta x}$ respectively, for each $m$ we have
\begin{equation}\label{eq:Am-Bm}
  \E|A(m)|^{4k}\le \frac{K}{N^{2k}},
  \qquad
  \E|B(m)|^{8k}\le \frac{K}{N^{4k}}.
\end{equation}
Define
\[
  A=\langle U(l)W(\beta l+h)\rangle,
  \qquad
  B=\langle W(\beta l+h)\rangle,
\]
\[
  A'=\E_\xi[U(\xi+r)W(\beta(\xi+r)+h)],
  \qquad
  B'=\Wtil(\beta r+h),
\]
and note
\[
  A-A'=\sum_{m=-S}^{S}e^{mh+Dm^2}A(m),
  \qquad
  B-B'=\sum_{m=-S}^{S}e^{mh+Dm^2}B(m).
\]
We can see
\begin{align*}
  \E\left[\frac{A}{B}-\frac{A'}{B'}\right]^{2k}
  &=\E\left[\frac{A-A'}{B}+\frac{A'}{B}\cdot\frac{B'-B}{B'}\right]^{2k}\\
  &\le 2^{2k}\E\left[\left(\frac{A-A'}{B}\right)^{2k}
  +\left(\frac{A'}{B}\right)^{2k}\left(\frac{B'-B}{B'}\right)^{2k}\right].
\end{align*}
We need lower bounds on $B$ and $B'$. Since every term in $W$ is strictly positive and the $m=0$ term contributes $1$,
\[
  B=\langle W(\beta l+h)\rangle\ge1,
  \qquad
  B'=\Wtil(\beta r+h)\ge1.
\]
Since all terms in the sum defining $B$ are positive, for each fixed $m\in\{-S,\ldots,S\}$,
\[
  B\ge e^{mh+Dm^2}\langle e^{m\beta l}\rangle,
  \qquad
  B'\ge e^{mh+Dm^2}\E_\xi[e^{m\beta(\xi+r)}].
\]
By Jensen's inequality,
\[
  \langle e^{m\beta l}\rangle\ge e^{m\beta\langle l\rangle}=e^{m\beta r},
  \qquad
  \E_\xi[e^{m\beta(\xi+r)}]\ge e^{m\beta r}.
\]
Thus $1/\langle e^{m\beta l}\rangle\le e^{-m\beta r}$ and $1/\E_\xi[e^{m\beta(\xi+r)}]\le e^{-m\beta r}$. We can see
\[
  \left|\frac{A-A'}{B}\right|
  \le \sum_{m=-S}^{S}\frac{e^{mh+Dm^2}|A(m)|}{B}
  \le \sum_{m=-S}^{S}|A(m)|e^{-m\beta r}.
\]
It suffices to bound each term. Using \eqref{eq:Am-Bm} and Chen's integrability estimate, Lemma~\ref{lem:chen-integrability} (that is, \cite[Lemma~2.3]{Chen2013}), since $r=N^{-1/2}\sum_j J_j\langle\sigma_j\rangle$ with $|\langle\sigma_j\rangle|\le S$ and $|m|\beta\le S\beta_0$, we obtain
\[
  \E\big[|A(m)|e^{-m\beta r}\big]^{2k}
  \le \big(\E|A(m)|^{4k}\big)^{1/2}\big(\E e^{-4km\beta r}\big)^{1/2}
  \le \frac{K}{N^k}.
\]
Therefore, summing over $m$ gives
\begin{equation}\label{eq:A-over-B-bound}
  \E\left|\frac{A-A'}{B}\right|^{2k}\le \frac{K}{N^k}.
\end{equation}
Now, we continue with
\[
  \left|\frac{A'}{B}\right|
  \le \sum_{m=-S}^{S}\E_\xi[|U(\xi+r)|e^{m\beta(\xi+r)}]e^{-m\beta r}
  =\sum_{m=-S}^{S}\E_\xi[|U(\xi+r)|e^{m\beta\xi}].
\]
For each term, we again use Lemma~\ref{lem:chen-integrability} to get
\[
  \E\left[\E_\xi[|U(\xi+r)|e^{m\beta\xi}]\right]^{4k}
  \le \E\left[\E_\xi[|U(\xi+r)|^{4k}e^{4km\beta\xi}]\right]
  \le K,
\]
and thus
\begin{equation}\label{eq:Aprime-over-B}
  \E\left|\frac{A'}{B}\right|^{4k}\le K.
\end{equation}
Lastly, we see
\[
  \left|\frac{B-B'}{B'}\right|
  \le \sum_{m=-S}^{S}\frac{e^{mh+Dm^2}|B(m)|}{B'}
  \le \sum_{m=-S}^{S}|B(m)|e^{-m\beta r}.
\]
For each term, we get
\[
  \E\big[|B(m)|e^{-m\beta r}\big]^{4k}
  \le \big(\E|B(m)|^{8k}\big)^{1/2}\big(\E e^{-8km\beta r}\big)^{1/2}
  \le \frac{K}{N^{2k}}.
\]
Summing over $m$ gives
\begin{equation}\label{eq:B-over-Bprime}
  \E\left|\frac{B-B'}{B'}\right|^{4k}\le \frac{K}{N^{2k}}.
\end{equation}
Using \eqref{eq:Aprime-over-B} and \eqref{eq:B-over-Bprime}, we get
\[
  \E\left[\left(\frac{A'}{B}\right)^{2k}\left(\frac{B'-B}{B'}\right)^{2k}\right]
  \le \left(\E\left|\frac{A'}{B}\right|^{4k}\right)^{1/2}
      \left(\E\left|\frac{B-B'}{B'}\right|^{4k}\right)^{1/2}
  \le \frac{K}{N^k}.
\]
Combining this with \eqref{eq:A-over-B-bound}, we obtain the proposition.
\end{proof}

\begin{lemma}\label{lem:onsager}
For every $k\ge1$,
\begin{equation}\label{eq:onsager-bound}
  \E|r_- -\gamma_N|^{2k}\le \frac{C}{N^k}.
\end{equation}
\end{lemma}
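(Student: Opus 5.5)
The plan is to express both $\langle l_N\rangle$ and $\langle\sigma_N\rangle$ through the cavity (size $N-1$) Gibbs measure. I would then use Proposition~\ref{prop:ratio} to replace each ratio by an explicit Gaussian expression in $r_-$, and finally observe that the Onsager term $\beta(p-q)\langle\sigma_N\rangle$ cancels the Gaussian integration-by-parts correction up to $O(N^{-1})$.

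\textbf{Step 1 (cavity representation).} From the decomposition $H_N(\sigma)=H_{N-1}(\sigma^-)+\beta\sigma_N l+D\sigma_N^2+h\sigma_N$, summing over $\sigma_N\in\{-S,\dots,S\}$ gives, for any $f(\sigma^-)$,
\[
\langle f\rangle=\frac{\langle f\,W(\beta l+h)\rangle_-}{\langle W(\beta l+h)\rangle_-},\qquad
\langle\sigma_N\rangle=\frac{\langle W'(\beta l+h)\rangle_-}{\langle W(\beta l+h)\rangle_-},
\]
with $W'(x)=\sum_m m e^{Dm^2+mx}$. Note that $\beta l=\beta_-l_-$, and $H_{N-1}$ is exactly a GS Hamiltonian on $N-1$ spins at inverse temperature $\beta_-\le\beta_0$. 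The field $l_-=(N-1)^{-1/2}\sum_{j<N}g_{jN}\sigma_j$ is a cavity field in the sense of Theorem~\ref{thm:cavity}, with $g_{jN}$ independent of $H_{N-1}$. Hence Proposition~\ref{prop:ratio} applies to the $(N-1)$-system with parameters $(\beta_-,p_-,q_-,v_-,\Dtil_-)$.

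\textbf{Step 2 (Gaussian evaluation).}
\begin{itemize}
\item For $\langle l_-\rangle$, apply Proposition~\ref{prop:ratio} with $U(x)=x$, which has moderate growth. Gaussian integration by parts gives $\E_{\xi_-}[\xi_-e^{m\beta_-\xi_-}]=m\beta_-v_-e^{\beta_-^2v_-m^2/2}$ and $\E_{\xi_-}[e^{m\beta_-\xi_-}]=e^{\beta_-^2v_-m^2/2}$. Therefore
\[
\frac{\E_{\xi_-}[(\xi_-+r_-)W(\beta_-(\xi_-+r_-)+h)]}{\Wtil_-(\beta_-r_-+h)}=r_-+\beta_-v_-\,M_-(\beta_-r_-+h),\qquad M_-:=\frac{\Wtil_-'}{\Wtil_-}.
\]
Consequently $\E|\langle l_-\rangle-r_--\beta_-v_-M_-|^{2k}\le C/N^k$.
\item For $\langle\sigma_N\rangle$, I would rerun the proof of Proposition~\ref{prop:ratio} with numerator $W'$ in place of $UW$. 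The only change is that $A-A'=\sum_m m e^{mh+Dm^2}B(m)$, with signed but bounded coefficients. The denominators, and hence the lower bounds $B,B'\ge e^{mh+Dm^2}e^{m\beta r}$, are unchanged, so the same H\"older and Lemma~\ref{lem:chen-integrability} argument yields $\E|\langle\sigma_N\rangle-M_-(\beta_-r_-+h)|^{2k}\le C/N^k$.
\end{itemize}
I expect this adaptation of the ratio estimate to a non-positive numerator to be the only point that needs checking, and it looks routine.

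\textbf{Step 3 (cancellation).} Since $\langle l\rangle=\sqrt{(N-1)/N}\,\langle l_-\rangle$, we have
\[
\gamma_N-r_-=\Big(\sqrt{\tfrac{N-1}{N}}-1\Big)r_-+\Big(\sqrt{\tfrac{N-1}{N}}\,\beta_-v_--\beta v\Big)M_-+\mathcal E,
\]
where $\mathcal E$ collects the two Step~2 errors, each multiplied by a bounded constant. The terms are controlled as follows:
\begin{itemize}
\item $|M_-|\le S$, and $|\sqrt{(N-1)/N}\,\beta_-v_--\beta v|\le C/N$ by Lemma~\ref{lem:param-comparison}.
\item $r_-$ is conditionally Gaussian with variance at most $S^2$, so $\E|r_-|^{2k}\le C$ and the first term has $2k$-th moment $O(N^{-2k})$.
\item $\E|\mathcal E|^{2k}\le C/N^k$ by Step~2.
\end{itemize}
Minkowski's inequality then gives \eqref{eq:onsager-bound}.

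The main obstacle is bookkeeping rather than substance. One must make sure the ratio estimates are applied to the $(N-1)$-system with its own parameters $(p_-,q_-,\Dtil_-)$, which is why $\Wtil_-$ appears and why the Onsager cancellation is exact only up to Lemma~\ref{lem:param-comparison}. One must also check that the constants from Sheng-Wu are uniform in $\beta_-\le\beta_0$.
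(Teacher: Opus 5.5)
Your proposal is correct and follows the paper's proof essentially step for step: the same cavity representation, the same Gaussian integration by parts giving $r_-+\beta_-v_-\Wtil_-'(\beta_-r_-+h)/\Wtil_-(\beta_-r_-+h)$, and the same decomposition of $\gamma_N-r_-$ controlled by Lemma~\ref{lem:param-comparison}. The only difference is cosmetic: for $\langle\sigma_N\rangle$ the paper applies Proposition~\ref{prop:ratio} directly with $U_1(x)=W'(\beta_-x+h)/W(\beta_-x+h)$, which is smooth with bounded derivatives and satisfies $U_1W=W'$, instead of rerunning the proof with a signed numerator as you do; both routes work.
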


\begin{proof}
Let
\[
  F_-(x):=\frac{\Wtil_-'(\beta_-x+h)}{\Wtil_-(\beta_-x+h)}.
\]
Since the spins are bounded by $S$, we have $|F_-(x)|\le S$. The cavity decomposition gives
\[
  \langle\sigma_N\rangle
  =\frac{\langle W'(\beta l+h)\rangle_-}{\langle W(\beta l+h)\rangle_-}
  =\frac{\langle W'(\beta_- l_-+h)\rangle_-}{\langle W(\beta_- l_-+h)\rangle_-},
\]
and
\[
  \langle l_N\rangle
  =\sqrt{\frac{N-1}{N}}\,
  \frac{\langle l_-W(\beta_-l_-+h)\rangle_-}{\langle W(\beta_-l_-+h)\rangle_-}.
\]
Apply Proposition~\ref{prop:ratio} to the $(N-1)$-spin system, first with
\[
  U_1(x):=\frac{W'(\beta_-x+h)}{W(\beta_-x+h)},
\]
for which $U_1(x)W(\beta_-x+h)=W'(\beta_-x+h)$ and
\[
  \frac{\E_{\xi_-}W'(\beta_-(r_-+\xi_-)+h)}{\Wtil_-(\beta_-r_-+h)}
  =F_-(r_-),
\]
and then with $U(x)=x$. Using Theorem~\ref{thm:cavity} inside the ratio estimate, there exist error variables $\Delta_1$ and $\Delta_2$ such that
\begin{equation}\label{eq:delta1}
  \langle\sigma_N\rangle=F_-(r_-)+\Delta_1,
  \qquad
  \E|\Delta_1|^{2k}\le \frac{C}{N^k},
\end{equation}
and
\begin{equation}\label{eq:delta2}
  \frac{\langle l_-W(\beta_-l_-+h)\rangle_-}{\langle W(\beta_-l_-+h)\rangle_-}
  =r_-+\beta_-v_-F_-(r_-)+\Delta_2,
  \qquad
  \E|\Delta_2|^{2k}\le \frac{C}{N^k}.
\end{equation}
The second identity uses the Gaussian integration-by-parts formula
\[
  \E_{\xi_-}[(r_-+\xi_-)W(\beta_-(r_-+\xi_-)+h)]
  =r_-\Wtil_-(\beta_-r_-+h)+\beta_-v_-\Wtil_-'(\beta_-r_-+h).
\]
Consequently, writing $c_N=\sqrt{(N-1)/N}$, equations \eqref{eq:delta1} and \eqref{eq:delta2} give
\begin{equation}\label{eq:gamma-r-explicit}
  \gamma_N-r_-=(c_N-1)r_-+(c_N\beta_-v_- -\beta v)F_-(r_-)+c_N\Delta_2-\beta v\Delta_1.
\end{equation}
By Lemma~\ref{lem:param-comparison},
\[
  |c_N-1|+|c_N\beta_-v_- -\beta v|\le \frac{C}{N}.
\]
Moreover $r_-$ has Gaussian moments of all orders uniformly in $N$, since it is a Gaussian linear combination of bounded magnetizations. Since $|F_-(r_-)|\le S$, \eqref{eq:gamma-r-explicit}, the elementary inequality $|x_1+\cdots+x_4|^{2k}\le C_k\sum_i|x_i|^{2k}$, and \eqref{eq:delta1}-\eqref{eq:delta2} imply
\[
  \E|\gamma_N-r_-|^{2k}
  \le C\left(\frac{\E|r_-|^{2k}}{N^{2k}}+\frac1{N^{2k}}+\E|\Delta_1|^{2k}+\E|\Delta_2|^{2k}\right)
  \le \frac{C}{N^k}.
\]
This proves the lemma.
\end{proof}

\begin{lemma}\label{lem:center-integrability}
There exists $a_0>0$, depending only on $S,D,h$ and the high-temperature bound, such that
\begin{equation}\label{eq:center-integrability}
  \sup_{N\ge2}\E\exp(a_0(r_-^2+\gamma_N^2))<\infty.
\end{equation}
\end{lemma}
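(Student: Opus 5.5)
Since $e^{a_0(x^2+y^2)}\le \tfrac12\bigl(e^{2a_0x^2}+e^{2a_0y^2}\bigr)$, it suffices to bound $\E e^{2a_0 r_-^2}$ and $\E e^{2a_0\gamma_N^2}$ separately. For $\gamma_N$ we will use the deterministic comparison $|\gamma_N|\le C_0(1+|r_-|)$ (established below) to reduce everything to exponential-square moments of $r_-$.

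\textbf{Step 1: $r_-$.} Condition on the disorder $(g_{ij})_{i<j<N}$. Then $r_-=(N-1)^{-1/2}\sum_{j<N}g_{jN}\langle\sigma_j\rangle_-$, and the variables $g_{jN}$ are independent of this $\sigma$-field. So $r_-$ is conditionally a centered Gaussian with variance
\[
  \frac1{N-1}\sum_{j<N}\langle\sigma_j\rangle_-^2\le S^2.
\]
Hence, conditionally, $\E e^{b r_-^2}\le (1-2bS^2)^{-1/2}$ for $b<1/(2S^2)$. Taking outer expectation gives $\sup_N\E e^{b r_-^2}<\infty$. This is also the special case of Lemma~\ref{lem:chen-integrability} with $f_1=(N-1)^{-1/2}$, $f_2=0$, $X_j=\langle\sigma_j\rangle_-$, $K_1=S$, $U(x)=e^{bx^2}$.

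\textbf{Step 2: the deterministic bound on $\gamma_N$.} We have $\gamma_N=\langle l_N\rangle-\beta v\langle\sigma_N\rangle$ and $|\langle\sigma_N\rangle|\le S$. By the cavity representation used in Lemma~\ref{lem:onsager},
\[
  \langle l_N\rangle=c_N\,\frac{\langle l_-W(\beta_-l_-+h)\rangle_-}{\langle W(\beta_-l_-+h)\rangle_-},
\]
which is the mean of $l_-$ under the tilted probability measure $d\mu\propto W(\beta_-l_-+h)\,dG_-$. We need to control this by $r_-$ up to a constant, pointwise in the disorder. The difficulty is that $l_-$ itself is not bounded, so the tilt could in principle shift the mean by a large amount. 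There are two routes.

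\emph{First route.} Bound $|E_\mu l_-|\le (E_\mu l_-^2)^{1/2}$, use $W\ge 1$, and apply Hölder to get $E_\mu l_-^2\le \langle l_-^2W\rangle_-$. Then use moderate exponential moments of $l_-$ under $\E\langle\cdot\rangle_-$: conditionally on the configuration, $l_-$ is Gaussian with variance $R_{1,1}^-\le S^2$, so $\E\langle e^{b l_-^2}\rangle_-<\infty$ for small $b$. This does not yield a pointwise comparison with $r_-$, but it is enough for the integrability claim. Concretely, we would bound $\E e^{2a_0\gamma_N^2}$ directly, using Jensen in the form $e^{a\langle X\rangle^2}\le\langle e^{aX^2}\rangle$ for the tilted measure. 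The obstacle is the normalization $\langle W\rangle_-$ in the denominator, which must be handled.

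\emph{Cleaner route.} Since $W$ is a sum of $2S+1$ positive exponentials, the tilted mean is a convex combination, with weights $\propto e^{Dm^2+mh}\langle e^{m\beta_-l_-}\rangle_-$, of the means of $l_-$ under the exponential tilts $e^{m\beta_- l_-}$. For a fixed $m$, the map $\lambda\mapsto\langle l_-e^{\lambda l_-}\rangle_-/\langle e^{\lambda l_-}\rangle_-$ is the derivative of the convex function $\log\langle e^{\lambda l_-}\rangle_-$, so it is monotone in $\lambda$ and equals $r_-$ at $\lambda=0$. Therefore
\[
  |\langle l_N\rangle|\le \max_{|\lambda|\le S\beta_0}\bigl|\partial_\lambda\log\langle e^{\lambda l_-}\rangle_-\bigr|.
\]

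\textbf{Step 3: the main obstacle.} Bounding the derivative of this log-moment generating function by $C(1+|r_-|)$ is the hard step. Theorem~\ref{thm:cavity} and Proposition~\ref{prop:ratio} only give this in $L^{2k}$ with random error terms $\Delta_2$, not pointwise. The plan is therefore to abandon the pointwise bound and bound $\E e^{2a_0\gamma_N^2}$ directly:
\[
  e^{2a_0\langle l_N\rangle^2}\le E_\mu e^{2a_0 l_-^2}\le \langle e^{2a_0l_-^2}W(\beta_-l_-+h)\rangle_-,
\]
using Jensen and $\langle W\rangle_-\ge1$. The right-hand side has finite expectation uniformly in $N$ by Lemma~\ref{lem:chen-integrability}, applied with $X_j=\sigma_j$ inside $\E\langle\cdot\rangle_-$ and $U(x)=e^{2a_0x^2}W(\beta_- x+h)$, which has moderate growth once $a_0$ is small. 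Combining this with the bound $|\beta v\langle\sigma_N\rangle|\le \beta_0S^3$ and Step 1 gives \eqref{eq:center-integrability}.
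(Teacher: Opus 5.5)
Your argument as completed in Steps 1 and 3 is correct and is essentially the paper's proof. It uses conditional Gaussianity of $r_-$ given the $(N-1)$-spin disorder, Jensen under the tilted measure $\mu\propto W(\beta_-l_-+h)\,dG_-$ with $\langle W\rangle_-\ge 1$, uniform Gaussian-square moments of the cavity field for a fixed configuration (you cite Lemma~\ref{lem:chen-integrability} where the paper computes $\E e^{2a_0L^2+m\beta L}$ explicitly), and the shift $|\gamma_N-\langle l_N\rangle|\le\beta_0S^3$; you should, however, delete the opening claim $|\gamma_N|\le C_0(1+|r_-|)$ and the Step~2 detour, since that pointwise comparison is never proved and plays no role in the argument you actually complete.
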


\begin{proof}
Conditional on the $(N-1)$-spin disorder, $r_-$ is a centered Gaussian variable with variance
\[
  \frac1{N-1}\sum_{j<N}\langle\sigma_j\rangle_-^2\le S^2,
\]
so $r_-$ has a uniform Gaussian-square exponential moment for sufficiently small $a_0$.

For $\gamma_N$, let $l=l_N$ and let $\mu_-$ be the tilted probability measure on the first $N-1$ spins proportional to $W(\beta l+h)dG_-$. The cavity identity gives $\langle l\rangle=\mu_-(l)$. By Jensen's inequality and the fact that the $m=0$ term of $W$ is one,
\[
  e^{2a_0\langle l\rangle^2}
  \le \frac{\langle e^{2a_0l^2}W(\beta l+h)\rangle_-}
  {\langle W(\beta l+h)\rangle_-}
  \le \langle e^{2a_0l^2}W(\beta l+h)\rangle_-.
\]
For a fixed spin configuration, $l$ is centered Gaussian with variance
\[
  s_\sigma^2=\frac1N\sum_{j<N}\sigma_j^2\le S^2.
\]
If $L\sim N(0,s_\sigma^2)$, then for $4a_0S^2<1$,
\[
  \E e^{2a_0L^2+m\beta L}
  =\frac1{\sqrt{1-4a_0s_\sigma^2}}
  \exp\left\{\frac{m^2\beta^2s_\sigma^2}{2(1-4a_0s_\sigma^2)}\right\},
\]
which is uniformly bounded for $|m|\le S$ and $\beta\le\beta_0$. Expanding $W$ and averaging over the first $N-1$ spins therefore gives
\[
  \sup_N\E e^{2a_0\langle l\rangle^2}<\infty.
\]
Finally, $0\le v=p-q\le S^2$ and $|\langle\sigma_N\rangle|\le S$, so
\[
  |\gamma_N-\langle l\rangle|\le \beta_0S^3.
\]
Reducing $a_0$ if necessary and combining the preceding bounds proves \eqref{eq:center-integrability}.
\end{proof}

\begin{lemma}\label{lem:lminus-l}
Under the same conditions as in Proposition~\ref{prop:ratio},
\begin{equation}\label{eq:lminus-l}
  \E\left[\frac{\langle U(l_-)W(\beta_-l_-+h)\rangle_-}{\langle W(\beta_-l_-+h)\rangle_-}-\langle U(l)\rangle\right]^{2k}\le \frac{K}{N^k}.
\end{equation}
\end{lemma}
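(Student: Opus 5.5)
The plan is to start from the cavity identity and reduce the claim to a smoothness estimate for $U$. Since $H_N(\sigma)=H_{N-1}(\sigma^-)+\beta\sigma_N l+D\sigma_N^2+h\sigma_N$, summing over $\sigma_N\in\{-S,\ldots,S\}$ gives exactly
\[
  \langle U(l)\rangle=\frac{\langle U(l)W(\beta l+h)\rangle_-}{\langle W(\beta l+h)\rangle_-}.
\]
Because $\beta l=\beta_-l_-$, the weights in this identity and in \eqref{eq:lminus-l} coincide. Hence the difference inside the bracket of \eqref{eq:lminus-l} equals
\[
  \frac{\langle (U(l_-)-U(l))W(\beta_-l_-+h)\rangle_-}{\langle W(\beta_-l_-+h)\rangle_-},
\]
and only the argument of $U$ changes, by $l_--l=(c_N^{-1}-1)l$ with $c_N=\sqrt{(N-1)/N}$, so that $|c_N^{-1}-1|\le C/N$.

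Next comes the bound on $U(l_-)-U(l)$. By the mean value theorem,
\[
  |U(l_-)-U(l)|\le \frac{C}{N}\,|l|\sup_{|y|\le 2|l|}|U'(y)|.
\]
Since $U'$ has moderate growth, for every $a>0$ the right side is at most $\frac{C_a}{N}e^{a l^2}$, where we absorbed $|l|$ and used $y^2\le 4l^2$. Therefore the absolute value of the bracket is at most $\frac{C_a}{N}\,\mu_-(e^{al^2})$. Here $\mu_-$ is the tilted probability measure proportional to $W(\beta l+h)\,dG_-$, as in the proof of Lemma~\ref{lem:center-integrability}.

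It remains to show $\E[\mu_-(e^{al^2})^{2k}]\le K$ for small enough $a$. This gives the bound $K/N^{2k}$, which is stronger than the required $K/N^k$. The plan is as follows.
\begin{enumerate}
  \item Use Jensen to get $\mu_-(e^{al^2})^{2k}\le \mu_-(e^{2kal^2})$.
  \item Use the denominator bound $\langle W\rangle_-\ge1$, coming from the $m=0$ term, to get $\mu_-(e^{2kal^2})\le\langle e^{2kal^2}W(\beta l+h)\rangle_-$.
  \item Take $\E$ first over the Gaussians $(g_{jN})_{j<N}$, with the $(N-1)$-spin configuration fixed. These Gaussians are independent of $H_{N-1}$, so we may exchange $\E_{g_{\cdot N}}$ with $\langle\cdot\rangle_-$ by Fubini.
  \item Evaluate the Gaussian integral. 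For fixed $\sigma^-$, $l$ is centered Gaussian with variance at most $S^2$. The explicit formula in the proof of Lemma~\ref{lem:center-integrability}, with $2a_0$ replaced by $2ka$, bounds $\E e^{2kal^2+m\beta l}$ uniformly in $N$, $|m|\le S$ and $\beta\le\beta_0$, as long as $4kaS^2<1$.
\end{enumerate}

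The only point needing care is the exchange of the Gaussian expectation with the Gibbs average in the denominator-free bound. The delicate step is choosing $a$ small, depending on $k$ and $S$, before invoking moderate growth of $U'$. Once that is done, the constant $K$ depends only on $k,U,S,D,h$ and $\beta_0$, as claimed.
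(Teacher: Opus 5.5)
Your proposal is correct and follows the paper's proof essentially step for step. Both use the cavity identity with identical weights (since $\beta_-l_-=\beta l$), the mean value theorem to extract an $O(1/N)$ factor, Jensen under the tilted measure $\mu_-$, the bound $W\ge 1$ to drop the denominator, and a Gaussian computation conditional on the $(N-1)$-spin configuration; both end with the stronger $K/N^{2k}$ bound. The only cosmetic difference is ordering: you apply the mean value theorem and moderate growth pointwise before Jensen, reducing everything to $e^{al^2}$ with the explicit Gaussian formula, whereas the paper applies Jensen first and then bounds the uniformly moderate-growth function $M_N(l_-)$.
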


\begin{proof}
The cavity decomposition gives the exact identity
\[
  \langle U(l)\rangle=\frac{\langle U(l)W(\beta l+h)\rangle_-}{\langle W(\beta l+h)\rangle_-}.
\]
Since $\beta_-l_-=\beta l$,
\[
  \frac{\langle U(l_-)W(\beta_-l_-+h)\rangle_-}{\langle W(\beta_-l_-+h)\rangle_-}-\langle U(l)\rangle
  =\frac{\langle (U(l_-)-U(l))W(\beta l+h)\rangle_-}{\langle W(\beta l+h)\rangle_-}.
\]
Let $d\mu_-$ be the tilted probability measure proportional to $W(\beta l+h)d G_-$. Jensen's inequality under this probability measure gives
\[
  \left|\frac{\langle (U(l_-)-U(l))W(\beta l+h)\rangle_-}{\langle W(\beta l+h)\rangle_-}\right|^{2k}
  \le
  \frac{\langle |U(l_-)-U(l)|^{2k}W(\beta l+h)\rangle_-}{\langle W(\beta l+h)\rangle_-}.
\]
Since $l=\sqrt{(N-1)/N}\,l_-$,
\[
  |l_- -l|\le \frac{C}{N}|l_-|.
\]
The mean-value theorem gives
\[
  |U(l_-)-U(l)|^{2k}\le \frac{C}{N^{2k}}|l_-|^{2k}\sup_{x\in[l,l_-]}|U'(x)|^{2k}.
\]
Because $U'$ has moderate growth and $l=\sqrt{(N-1)/N}\,l_-$, the function
\[
  M_N(x):=|x|^{2k}\sup_{y\text{ between }\sqrt{(N-1)/N}\,x\text{ and }x}|U'(y)|^{2k}
\]
has moderate growth uniformly in $N$. Since the denominator of the tilted ratio is at least one, it is enough to bound $\E\langle M_N(l_-)W(\beta_-l_-+h)\rangle_-$. Conditional on a spin configuration, $l_-$ is Gaussian with variance at most $S^2$, expanding $W$ and using the moderate growth of $M_N$ gives a uniform bound. Therefore the $2k$-th moment of the last display is bounded by $C/N^{2k}$, which is stronger than the claimed $C/N^k$ bound.
\end{proof}

\begin{lemma}\label{lem:replace-r-gamma-weight}
Under the same conditions as in Proposition~\ref{prop:ratio}, we have
\begin{equation}\label{eq:replace-r-gamma-weight}
  \E\left[\E_\xi\left[U(\xi+r_-)
  \left(\frac{W(\beta_-(\xi+r_-)+h)}{\Wtil_-(\beta_-r_-+h)}
  -\frac{W(\beta_-(\xi+\gamma)+h)}{\Wtil_-(\beta_-\gamma+h)}\right)\right]\right]^{2k}\le \frac{K}{N^k}.
\end{equation}
\end{lemma}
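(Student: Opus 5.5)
The plan is to treat the difference in \eqref{eq:replace-r-gamma-weight} as a smooth function of the center, evaluated at the two points $r_-$ and $\gamma=\gamma_N$. Then we apply the mean value theorem together with the Onsager estimate of Lemma~\ref{lem:onsager} and the exponential integrability of Lemma~\ref{lem:center-integrability}. Concretely, for $y\in\R$ set
\[
  G(y):=\E_\xi\left[U(\xi+r_-)\frac{W(\beta_-(\xi+y)+h)}{\Wtil_-(\beta_-y+h)}\right],
\]
with $r_-$ held fixed inside $U$. The quantity inside the outer expectation is then exactly $G(r_-)-G(\gamma)$. Hence
\[
  |G(r_-)-G(\gamma)|\le |r_--\gamma|\sup_{y\in[r_-\wedge\gamma,\,r_-\vee\gamma]}|G'(y)|.
\]
By Cauchy--Schwarz,
\[
  \E|G(r_-)-G(\gamma)|^{2k}\le \big(\E|r_--\gamma|^{4k}\big)^{1/2}\Big(\E\sup_y|G'(y)|^{4k}\Big)^{1/2}.
\]
Lemma~\ref{lem:onsager} with $2k$ replaced by $4k$ bounds the first factor by $C/N^{k}$. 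It therefore remains to show the supremum factor is bounded uniformly in $N$.

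To control $G'$, we expand $W(\beta_-(\xi+y)+h)=\sum_m e^{Dm^2+mh}e^{m\beta_-(\xi+y)}$ and differentiate in $y$. Writing $\omega_m(y):=e^{Dm^2+mh+m\beta_-y}/\Wtil_-(\beta_-y+h)$, we get
\[
  G(y)=\sum_{m=-S}^S \omega_m(y)\,\E_\xi[U(\xi+r_-)e^{m\beta_-\xi}].
\]
Here the dependence on $y$ sits only in the weights $\omega_m(y)$. The key observation is that $\Wtil_-(\beta_-y+h)\ge e^{\Dtil_- m^2+m(\beta_-y+h)}$ for each $m$. Therefore
\[
  0\le\omega_m(y)\le e^{(D-\Dtil_-)m^2}\le 1,
\]
because $\Dtil_-\ge D$. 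Moreover,
\[
  \omega_m'(y)=\beta_-\omega_m(y)\big(m-\Wtil_-'/\Wtil_-\big),
\]
so $|\omega_m'|\le 2S\beta_0$ uniformly in $y$. Consequently,
\[
  \sup_y|G'(y)|\le 2S\beta_0\sum_m \E_\xi\big[|U(\xi+r_-)|e^{m\beta_-\xi}\big].
\]
This bound no longer depends on $y$, so the supremum over the random interval between $r_-$ and $\gamma$ causes no difficulty.

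Finally, we bound $\E\big(\E_\xi[|U(\xi+r_-)|e^{m\beta_-\xi}]\big)^{4k}$ exactly as in the proof of \eqref{eq:Aprime-over-B}. We use Jensen, the moderate growth of $U$ with a small exponent $a$, and Lemma~\ref{lem:chen-integrability}, since $r_-$ is a Gaussian combination of magnetizations bounded by $S$. Alternatively, we may use the bound $\E e^{a_0 r_-^2}<\infty$ from Lemma~\ref{lem:center-integrability}. This gives a uniform constant, and combining with the first paragraph yields $K/N^k$.

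The step I expect to need the most care is the uniform bound on the $y$-derivative of the normalized weights. The naive approach differentiates $W(\beta_-(\xi+y)+h)$ and $1/\Wtil_-$ separately, which produces factors like $e^{S\beta|y|}$ evaluated at the random, unbounded point between $r_-$ and $\gamma$. Controlling those factors would require the exponential integrability of $\gamma$ from Lemma~\ref{lem:center-integrability}. Rewriting $G$ through the probability-like weights $\omega_m$ avoids this. Should a residual $y$-dependence survive, the fallback is to bound $e^{c|y|}\le e^{c(|r_-|+|\gamma|)}$ and absorb it using Lemma~\ref{lem:center-integrability} via H\"older's inequality.
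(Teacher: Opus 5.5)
Your proposal is correct and follows essentially the same route as the paper: both use the mean value theorem in the center variable with a derivative bound that does not depend on the intermediate point, then Cauchy--Schwarz with Lemma~\ref{lem:onsager} at moment $4k$ and Gaussian integrability of $\E_\xi[|U(\xi+r_-)|e^{S\beta_-|\xi|}]$. Your weights $\omega_m\le 1$ play the role of the paper's bounds $\Wtil_-\ge W$ and $W(y_1)\le e^{S\beta_-|\xi|}W(y_2)$; the only cosmetic difference is that you apply the mean value theorem after integrating out $\xi$, whereas the paper applies it pointwise in $\xi$.
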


\begin{proof}
Let us define the function
\[
  f(x)=\frac{W(\beta_-(\xi+x)+h)}{\Wtil_-(\beta_-x+h)}.
\]
Let $y_1=\beta_-(\xi+x)+h$ and $y_2=\beta_-x+h$. Applying the quotient rule, we have
\[
  f'(x)=\beta_-\frac{W'(y_1)}{\Wtil_-(y_2)}-
  \beta_-\frac{W(y_1)\Wtil_-'(y_2)}{\Wtil_-(y_2)^2}.
\]
We have $|W'(y)|\le S W(y)$ and $|\Wtil_-'(y)|\le S\Wtil_-(y)$. Applying the triangle inequality yields
\begin{equation}\label{eq:fprime-bound-1}
  |f'(x)|\le 2S\beta_-\frac{W(y_1)}{\Wtil_-(y_2)}.
\end{equation}
Notice that $y_1=y_2+\beta_-\xi$. Using $\cosh(A+B)\le \cosh(A)e^{|B|}$, we have
\[
  W(y_1)\le e^{S\beta_-|\xi|}W(y_2).
\]
Since $p_-\ge q_-$ by Jensen's inequality, $\Dtil_-=D+\beta_-^2(p_- -q_-)/2\ge D$, so $\Wtil_-(x)\ge W(x)$ for all $x$. Combining these facts,
\[
  \frac{W(y_1)}{\Wtil_-(y_2)}\le \frac{W(y_1)}{W(y_2)}\le e^{S\beta_-|\xi|}.
\]
We now get
\[
  |f'(x)|\le 2S\beta_- e^{S\beta_-|\xi|}.
\]
By the mean value theorem,
\[
  |f(r_-)-f(\gamma)|\le 2S\beta_-|r_- -\gamma|e^{S\beta_-|\xi|}.
\]
Thus
\begin{align*}
  \E\left[\E_\xi[|U(\xi+r_-)| |f(r_-)-f(\gamma)|]\right]^{2k}
  &\le (2S\beta_-)^{2k}\E\left[|r_- -\gamma|^{2k}
      \left(\E_\xi[|U(\xi+r_-)|e^{S\beta_-|\xi|}]\right)^{2k}\right]\\
  &\le (2S\beta_-)^{2k}\left(\E|r_- -\gamma|^{4k}\right)^{1/2}\\
  &\qquad\times
      \left(\E\left[\left(\E_\xi[|U(\xi+r_-)|e^{S\beta_-|\xi|}]\right)^{4k}\right]\right)^{1/2}\\
  &\le \frac{K}{N^k},
\end{align*}
where the last inequality follows from Lemma~\ref{lem:onsager} and the Gaussian integrability bound in Lemma~\ref{lem:chen-integrability}.
\end{proof}

\begin{lemma}\label{lem:replace-U-center}
Under the same conditions as in Proposition~\ref{prop:ratio}, we have
\begin{equation}\label{eq:replace-U-center}
  \E\left[\E_\xi\left[(U(\xi+r_-)-U(\xi+\gamma))
  \frac{W(\beta_-(\xi+\gamma)+h)}{\Wtil_-(\beta_-\gamma+h)}\right]\right]^{2k}\le \frac{K}{N^k}.
\end{equation}
\end{lemma}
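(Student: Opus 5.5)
The plan is to mirror the proof of Lemma~\ref{lem:replace-r-gamma-weight}, now moving the center inside $U$ rather than inside the weight. Fix the Gaussian $\xi=\xi_-$ and write the integrand as
\[
  (U(\xi+r_-)-U(\xi+\gamma))\,\frac{W(\beta_-(\xi+\gamma)+h)}{\Wtil_-(\beta_-\gamma+h)}.
\]
First I bound the weight. With $y_2=\beta_-\gamma+h$ and $y_1=y_2+\beta_-\xi$, the same two facts used before apply. Termwise we have $W(y_1)\le e^{S\beta_-|\xi|}W(y_2)$. Also $\Dtil_-\ge D$, so $\Wtil_-\ge W$. Together these give
\[
  0\le \frac{W(\beta_-(\xi+\gamma)+h)}{\Wtil_-(\beta_-\gamma+h)}\le e^{S\beta_-|\xi|}.
\]
This bound is deterministic and independent of $\gamma$.

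Next I bound the difference of $U$ by the mean value theorem:
\[
  |U(\xi+r_-)-U(\xi+\gamma)|\le |r_--\gamma|\sup_{|y|\le |r_-|+|\gamma|}|U'(\xi+y)|.
\]
Since $U'$ has moderate growth, for any $a>0$ we have $|U'(z)|\le C_a e^{a z^2}$. Using $(\xi+y)^2\le 2\xi^2+2y^2$, the supremum is at most
\[
  C_a e^{2a\xi^2}e^{2a(r_-^2+\gamma^2)}.
\]
Combining this with the weight bound, the inner expectation is at most
\[
  |r_--\gamma|\,e^{2a(r_-^2+\gamma^2)}\,C_a\,\E_\xi\!\big[e^{2a\xi^2+S\beta_-|\xi|}\big].
\]
The last factor is a finite constant once $4a v_-<1$, and $v_-\le S^2$ makes this uniform in $N$.

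Then I raise the bound to the power $2k$, take $\E$, and apply Cauchy--Schwarz. This gives
\[
  \big(\E|r_--\gamma|^{4k}\big)^{1/2}\big(\E e^{8ka(r_-^2+\gamma^2)}\big)^{1/2}.
\]
The first factor is at most $C/N^k$ by Lemma~\ref{lem:onsager} applied with $2k$ in place of $k$. The second factor is bounded uniformly in $N$ by Lemma~\ref{lem:center-integrability}, once $a$ is chosen with $8ka\le a_0$.

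The main obstacle is that $\gamma$ is not a Gaussian linear combination with bounded coefficients, so Lemma~\ref{lem:chen-integrability} cannot be applied to it directly the way it was for $r_-$. Moving the $U'$ growth onto $\gamma$ requires a Gaussian-square exponential moment of $\gamma$, and that is exactly what Lemma~\ref{lem:center-integrability} supplies. The remaining care is the order of choosing constants. First fix $k$ and $a_0$, then choose $a$ small enough for both $8ka\le a_0$ and $4av_-<1$, and only then take $C_a$ from the moderate growth of $U'$. All resulting constants depend only on $k,U,S,D,h$ and $\beta_0$.
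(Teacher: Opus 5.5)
Your proof is correct and follows essentially the same route as the paper. Both arguments use the mean value theorem for $U$, the deterministic weight bound $e^{S\beta_-|\xi|}$ coming from $\Wtil_-\ge W$, and a Cauchy--Schwarz split controlled by Lemma~\ref{lem:onsager} at moment $4k$ and by Lemma~\ref{lem:center-integrability}. There is one harmless slip: with the supremum taken over $|y|\le|r_-|+|\gamma|$ you only get $y^2\le 2(r_-^2+\gamma^2)$, so the exponent should be $4a(r_-^2+\gamma^2)$; either shrink $a$ so that $16ka\le a_0$, or take the supremum over the segment between $r_-$ and $\gamma$, where $y^2\le r_-^2+\gamma^2$ and your stated bound holds.
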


\begin{proof}
Let $f(x):=U(\xi+x)$. Then $f'(x)=U'(\xi+x)$. By the mean value theorem,
\[
  |U(\xi+r_-)-U(\xi+\gamma)|\le |r_- -\gamma|\sup_{x\in[r_-,\gamma]}|U'(\xi+x)|.
\]
Therefore,
\begin{align*}
&\left|\E_\xi\left[(U(\xi+r_-)-U(\xi+\gamma))
\frac{W(\beta_-(\xi+\gamma)+h)}{\Wtil_-(\beta_-\gamma+h)}\right]\right|\\
&\qquad\le |r_- -\gamma|\E_\xi\left[\sup_{x\in[r_-,\gamma]}|U'(\xi+x)|
\frac{W(\beta_-(\xi+\gamma)+h)}{\Wtil_-(\beta_-\gamma+h)}\right].
\end{align*}
Since $\Wtil_-(\beta_-\gamma+h)\ge W(\beta_-\gamma+h)$ and $\cosh(a+b)\le \cosh(a)e^{|b|}$,
\[
  \frac{W(\beta_-(\xi+\gamma)+h)}{\Wtil_-(\beta_-\gamma+h)}\le e^{S\beta_-|\xi|}.
\]
Taking the $2k$-th moment and applying Holder's inequality,
\begin{align*}
&\E\left[\E_\xi\left[(U(\xi+r_-)-U(\xi+\gamma))\frac{W(\beta_-(\xi+\gamma)+h)}{\Wtil_-(\beta_-\gamma+h)}\right]\right]^{2k}\\
&\qquad\le \left(\E|r_- -\gamma|^{4k}\right)^{1/2}
\left(\E\left[\left(\E_\xi\left[\sup_{x\in[r_-,\gamma]}|U'(\xi+x)|e^{S\beta_-|\xi|}\right]\right)^{4k}\right]\right)^{1/2}.
\end{align*}
The first factor is bounded by $K/N^k$ by Lemma~\ref{lem:onsager} applied with moment $4k$. For the second factor, moderate growth implies that, for every sufficiently small $a>0$, the integrand is bounded by a constant times $\exp\{a(\xi^2+r_-^2+\gamma^2)\}$. Lemma~\ref{lem:center-integrability} and the Gaussian exponential moments of $\xi$ therefore give a uniform bound. Hence \eqref{eq:replace-U-center} follows.
\end{proof}

\begin{lemma}\label{lem:denom-compare}
Under the same conditions as in Proposition~\ref{prop:ratio}, we have
\begin{equation}\label{eq:denom-compare}
  \E\left[\left(\frac1{\Wtil_-(\beta_-\gamma+h)}-\frac1{\Wtil(\beta\gamma+h)}\right)
  \E_\xi[U(\xi+\gamma)W(\beta_-(\xi+\gamma)+h)]\right]^{2k}\le \frac{K}{N^k}.
\end{equation}
\end{lemma}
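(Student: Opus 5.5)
The plan is to treat the difference of reciprocals as a deterministic-parameter perturbation and bound it pointwise by $C N^{-1}$ times a factor with good integrability in $\gamma$.

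\textbf{Step 1: the denominator difference.} Write $x=\gamma$. Then
\[
  \frac1{\Wtil_-(\beta_-x+h)}-\frac1{\Wtil(\beta x+h)}=\frac{\Wtil(\beta x+h)-\Wtil_-(\beta_-x+h)}{\Wtil_-(\beta_-x+h)\Wtil(\beta x+h)}.
\]
Term by term, the numerator is
\[
  \sum_m\Big(e^{\Dtil m^2+m(\beta x+h)}-e^{\Dtil_-m^2+m(\beta_-x+h)}\Big).
\]
By the mean value theorem in the exponent, each summand is bounded by
\[
  \big(|\Dtil-\Dtil_-|m^2+|m||\beta-\beta_-||x|\big)\,e^{\max}.
\]
Here $e^{\max}$ is the larger of the two exponentials. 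Lemma~\ref{lem:param-comparison} makes the prefactor at most $\frac CN(1+|x|)$.

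The larger exponential is at most $e^{CS^2/N+S|\beta-\beta_-||x|}\,e^{\Dtil_-m^2+m(\beta_-x+h)}\le e^{C(1+|x|)/N}\,\Wtil_-(\beta_-x+h)$. Dividing by the product of the denominators, and using $\Wtil\ge1$, gives the pointwise bound
\[
  \Big|\frac1{\Wtil_-(\beta_-\gamma+h)}-\frac1{\Wtil(\beta\gamma+h)}\Big|\le\frac CN(1+|\gamma|)e^{C|\gamma|/N}\,\frac{1}{\Wtil(\beta\gamma+h)}.
\]
For a cruder but sufficient version, drop the last factor using $\Wtil\ge1$.

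\textbf{Step 2: the remaining factor.} I must not simply bound $\E_\xi[U(\xi+\gamma)W(\beta_-(\xi+\gamma)+h)]$ by itself, since $W(\beta_-(\xi+\gamma)+h)$ grows like $e^{S\beta|\gamma|}$. Instead I keep one denominator $\Wtil_-(\beta_-\gamma+h)$ from Step 1, rather than discarding both, and use the argument of Lemma~\ref{lem:replace-U-center}. Since $\Wtil_-\ge W$ and $W(y+b)\le e^{S|b|}W(y)$,
\[
  \frac{W(\beta_-(\xi+\gamma)+h)}{\Wtil_-(\beta_-\gamma+h)}\le e^{S\beta_-|\xi|}.
\]
So the full quantity inside the bracket is bounded by
\[
  \frac CN(1+|\gamma|)e^{C|\gamma|/N}\,\E_\xi\big[|U(\xi+\gamma)|e^{S\beta_-|\xi|}\big].
\]

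\textbf{Step 3: taking moments.} Raise to the $2k$-th power and take $\E$. Moderate growth of $U$ gives $|U(\xi+\gamma)|\le C_a e^{2a(\xi^2+\gamma^2)}$ for any small $a$. Combined with $e^{C|\gamma|/N}(1+|\gamma|)^{2k}\le C_a e^{a\gamma^2}$, the integrand is at most $C\exp\{a'\gamma^2\}$ times Gaussian moments in $\xi$. Choosing $a'\le a_0$, Lemma~\ref{lem:center-integrability} yields a uniform bound. The result is $\E[\cdot]^{2k}\le K N^{-2k}\le KN^{-k}$.

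The main obstacle is Step 1's handling of the exponential growth in $\gamma$. The order matters: one denominator must be spent on absorbing $W(\beta_-(\xi+\gamma)+h)$, and the other on controlling the numerator difference. The linear-in-$|\gamma|$ parameter error is harmless only because it carries the factor $1/N$ and $\gamma$ has a Gaussian-square exponential moment.
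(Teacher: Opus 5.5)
Your proof is essentially the paper's argument: a pointwise relative bound $\frac CN(1+|\gamma|)e^{C(1+|\gamma|)/N}$ on the denominator difference, absorption of $W(\beta_-(\xi+\gamma)+h)$ by a $\Wtil$ denominator via $W(y+b)\le e^{S|b|}W(y)$, then Lemma~\ref{lem:center-integrability}. There is one bookkeeping slip: Step 1 sums $e^{\Dtil_-m^2+m(\beta_-\gamma+h)}$, which already uses up $\Wtil_-(\beta_-\gamma+h)$, so the denominator left for Step 2 is $\Wtil(\beta\gamma+h)$, not $\Wtil_-$. You can fix this by bounding $e^{\max}\le e^{C(1+|\gamma|)/N}e^{\Dtil m^2+m(\beta\gamma+h)}$, which is the paper's pairing, or by using $W(\beta_-\gamma+h)\le e^{C|\gamma|/N}\Wtil(\beta\gamma+h)$.

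Also, your claim that a denominator \emph{must} be spent on $W(\beta_-(\xi+\gamma)+h)$ is overstated. Since $\E e^{a_0\gamma^2}<\infty$ dominates $e^{c|\gamma|}$ for every $c$, your ``cruder'' version that discards the remaining denominator already works.
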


\begin{proof}
We can write the expression inside the expectation as a product
\[
  \frac{\Wtil(\beta\gamma+h)-\Wtil_-(\beta_-\gamma+h)}{\Wtil(\beta\gamma+h)}\cdot
  \frac{\E_\xi[U(\xi+\gamma)W(\beta_-(\xi+\gamma)+h)]}{\Wtil_-(\beta_-\gamma+h)},
\]
and bound each factor separately. Using $\Wtil_-(\beta_-\gamma+h)\ge W(\beta_-\gamma+h)$ and $\cosh(a+b)\le \cosh(a)e^{|b|}$, we get
\[
  \frac{W(\beta_-(\xi+\gamma)+h)}{\Wtil_-(\beta_-\gamma+h)}
  \le \frac{W(\beta_-(\xi+\gamma)+h)}{W(\beta_-\gamma+h)}
  \le e^{S\beta_0|\xi|}.
\]
Therefore
\[
  \frac{|\E_\xi[U(\xi+\gamma)W(\beta_-(\xi+\gamma)+h)]|}{\Wtil_-(\beta_-\gamma+h)}
  \le \E_\xi[|U(\xi+\gamma)|e^{S\beta_0|\xi|}].
\]
By Jensen's inequality, the moderate growth of $U$, Lemma~\ref{lem:center-integrability}, and the Gaussian exponential moments of $\xi$, we get
\begin{equation}\label{eq:UW-moment}
  \E\left[\left(\frac{|\E_\xi[U(\xi+\gamma)W(\beta_-(\xi+\gamma)+h)]|}{\Wtil_-(\beta_-\gamma+h)}\right)^{4k}\right]\le K.
\end{equation}
Define
\[
  p_m(\gamma,h):=\frac{e^{\Dtil m^2+m(\beta\gamma+h)}}{\Wtil(\beta\gamma+h)},\qquad m\in\{-S,\ldots,S\}.
\]
Since the factors $e^{mh}$ cancel between numerator and denominator,
\[
  \frac{\Wtil_-(\beta_-\gamma+h)}{\Wtil(\beta\gamma+h)}
  =\sum_{m=-S}^{S}p_m(\gamma,h)e^{(\Dtil_- -\Dtil)m^2+m(\beta_- -\beta)\gamma}.
\]
Therefore
\[
  \frac{\Wtil(\beta\gamma+h)-\Wtil_-(\beta_-\gamma+h)}{\Wtil(\beta\gamma+h)}
  =\sum_{m=-S}^{S}p_m(\gamma,h)\left(1-e^{(\Dtil_- -\Dtil)m^2+m(\beta_- -\beta)\gamma}\right).
\]
By Lemma~\ref{lem:param-comparison}, $|\Dtil-\Dtil_-|\le C/N$ and $|\beta-\beta_-|\le C/N$, so for each $m$,
\[
  |(\Dtil_- -\Dtil)m^2+m(\beta_- -\beta)\gamma|\le \frac{C(1+|\gamma|)}{N}.
\]
Using $|1-e^x|\le |x|e^{|x|}$ and $\sum_m p_m=1$,
\begin{equation}\label{eq:denom-factor}
  \left|\frac{\Wtil(\beta\gamma+h)-\Wtil_-(\beta_-\gamma+h)}{\Wtil(\beta\gamma+h)}\right|
  \le \frac{C(1+|\gamma|)}{N}e^{C(1+|\gamma|)/N}.
\end{equation}
Since $e^{C(1+|\gamma|)/N}\le e^{C(1+|\gamma|)}$, Lemma~\ref{lem:center-integrability} gives
\begin{equation}\label{eq:denom-factor-moment}
  \E\left[\left(\frac{C(1+|\gamma|)}{N}e^{C(1+|\gamma|)/N}\right)^{4k}\right]\le \frac{K}{N^{4k}}.
\end{equation}
By Holder's inequality, \eqref{eq:UW-moment}, and \eqref{eq:denom-factor-moment}, we obtain \eqref{eq:denom-compare}.
\end{proof}

\begin{lemma}\label{lem:beta-compare}
Under the same conditions as in Proposition~\ref{prop:ratio}, we have
\begin{equation}\label{eq:beta-compare}
  \E\left[\E_\xi\left[U(\xi+\gamma)
  \left(\frac{W(\beta_-(\xi+\gamma)+h)}{\Wtil(\beta\gamma+h)}
  -\frac{W(\beta(\xi+\gamma)+h)}{\Wtil(\beta\gamma+h)}\right)\right]\right]^{2k}\le \frac{K}{N^k}.
\end{equation}
\end{lemma}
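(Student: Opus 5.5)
The argument mirrors Lemma~\ref{lem:replace-r-gamma-weight}, with the mean value theorem now applied in the inverse-temperature variable. The expression inside the outer expectation is
\[
  \frac{1}{\Wtil(\beta\gamma+h)}\E_\xi\big[U(\xi+\gamma)\big(W(\beta_-(\xi+\gamma)+h)-W(\beta(\xi+\gamma)+h)\big)\big].
\]
For fixed $\xi$ and $\gamma$, set $g(b):=W(b(\xi+\gamma)+h)$ for $b\in[\beta_-,\beta]$. Then $g'(b)=(\xi+\gamma)W'(b(\xi+\gamma)+h)$, and since $|W'|\le SW$,
\[
  |g(\beta_-)-g(\beta)|\le S|\beta-\beta_-|\,|\xi+\gamma|\sup_{b\in[\beta_-,\beta]}W(b(\xi+\gamma)+h).
\]
By Lemma~\ref{lem:param-comparison}, $|\beta-\beta_-|\le C/N$.

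Next I would normalize by the denominator. Write $b(\xi+\gamma)+h=(\beta\gamma+h)+\big(b\xi+(b-\beta)\gamma\big)$. Because $W$ is a positive sum of exponentials in $m$ with $|m|\le S$, we have $W(y+z)\le e^{S|z|}W(y)$, the same bound as the $\cosh$ inequality used earlier. Moreover $\Wtil(x)\ge W(x)$, since $\Dtil\ge D$ (as $p\ge q$). Together these give
\[
  \sup_{b\in[\beta_-,\beta]}\frac{W(b(\xi+\gamma)+h)}{\Wtil(\beta\gamma+h)}\le e^{S\beta_0|\xi|+SC|\gamma|/N}\le e^{S\beta_0|\xi|+SC|\gamma|}.
\]
The absolute value of the inner quantity is therefore at most
\[
  \frac{C}{N}\,e^{SC|\gamma|}\,\E_\xi\big[|U(\xi+\gamma)|\,|\xi+\gamma|\,e^{S\beta_0|\xi|}\big].
\]

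For the $2k$-th moment, I would use moderate growth of $U$ with a small parameter $a$. This bounds $|U(\xi+\gamma)||\xi+\gamma|e^{S\beta_0|\xi|}$ by $C_a\exp\{2a(\xi^2+\gamma^2)+S\beta_0|\xi|\}$. The $\xi$-integral is finite once $2a<1/(4v_-)$, and it leaves a factor $e^{2a\gamma^2}$. The factor $e^{SC|\gamma|}$ is absorbed by $e^{\epsilon\gamma^2}$ up to a constant. Raising to the power $2k$ and choosing $a,\epsilon$ small enough that $2k(2a+\epsilon)\le a_0$, Lemma~\ref{lem:center-integrability} makes $\E\exp(2k(2a+\epsilon)\gamma^2)$ bounded uniformly in $N$. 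This yields a bound $K/N^{2k}$, which is stronger than the required $K/N^k$.

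The main obstacle is the bookkeeping of exponential moments in $\gamma$. One must make sure the growth exponents produced by moderate growth and by the factor $e^{S|b-\beta||\gamma|}$, after raising to the $2k$-th power, stay below the threshold $a_0$ of Lemma~\ref{lem:center-integrability}. The smallness of $a$ allowed by moderate growth makes this possible. If a cleaner split is preferred, Hölder's inequality can separate the $\gamma$-dependent and $\xi$-dependent factors, as in Lemma~\ref{lem:replace-U-center}.
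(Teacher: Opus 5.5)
Your proposal is correct and follows essentially the same route as the paper: the mean value theorem in the inverse-temperature variable with $|W'|\le SW$, the shift bound $W(y+z)\le e^{S|z|}W(y)$ together with $\Wtil\ge W$ to normalize by the denominator, and then moderate growth of $U$ plus Lemma~\ref{lem:center-integrability} to reach the stronger bound $K/N^{2k}$. The differences are cosmetic. You do the shift estimate in one step, whereas the paper first removes $\beta_-\xi$ and then compares $x\gamma$ with $\beta\gamma$. You also spell out the exponent bookkeeping in $\gamma$ more explicitly than the paper does.
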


\begin{proof}
For $x\in[\beta_-,\beta]$, define $f(x):=W(x(\xi+\gamma)+h)$. Then
\[
  |f'(x)|\le S|\xi+\gamma|W(x(\xi+\gamma)+h).
\]
Since $x\in[\beta_-,\beta]$ and $\beta\le\beta_0$, using $\cosh(a+b)\le\cosh(a)e^{|b|}$, we get
\[
  W(x(\xi+\gamma)+h)\le e^{S\beta_0|\xi|}W(x\gamma+h).
\]
We do not use monotonicity of $W$ in its argument, since that monotonicity is false in general when $h$ and $\gamma$ have opposite signs. Instead, because $|x-\beta|\le C/N$,
\[
  W(x\gamma+h)\le e^{S|x-\beta||\gamma|}W(\beta\gamma+h)\le e^{CS|\gamma|/N}W(\beta\gamma+h).
\]
As $\Wtil(\beta\gamma+h)\ge W(\beta\gamma+h)$, it follows that
\[
  \frac{|f'(x)|}{\Wtil(\beta\gamma+h)}\le S|\xi+\gamma|e^{S\beta_0|\xi|}e^{CS|\gamma|/N}.
\]
By the mean value theorem,
\[
  \left|\frac{W(\beta_-(\xi+\gamma)+h)}{\Wtil(\beta\gamma+h)}-
  \frac{W(\beta(\xi+\gamma)+h)}{\Wtil(\beta\gamma+h)}\right|
  \le |\beta-\beta_-|S|\xi+\gamma|e^{S\beta_0|\xi|}e^{CS|\gamma|/N}.
\]
Therefore,
\begin{align*}
&\left|\E_\xi\left[U(\xi+\gamma)
  \left(\frac{W(\beta_-(\xi+\gamma)+h)}{\Wtil(\beta\gamma+h)}
  -\frac{W(\beta(\xi+\gamma)+h)}{\Wtil(\beta\gamma+h)}\right)\right]\right|\\
&\qquad\le |\beta-\beta_-|\E_\xi[|U(\xi+\gamma)|S|\xi+\gamma|e^{S\beta_0|\xi|}e^{CS|\gamma|/N}].
\end{align*}
Taking the $2k$-th moment and applying Holder's inequality, together with $|\beta-\beta_-|\le K/N$, Lemma~\ref{lem:center-integrability}, and the Gaussian exponential moments of $\xi$, gives the stronger estimate $K/N^{2k}$, hence \eqref{eq:beta-compare}.
\end{proof}

\begin{lemma}\label{lem:variance-comparison}
Let $\xi_-\sim N(0,v_-)$ and $\xi\sim N(0,v)$ be independent of the disorder. Then
\begin{equation}\label{eq:variance-comparison}
  \E\left|\frac{\E_{\xi_-}[U(\xi_-+\gamma)W(\beta(\xi_-+\gamma)+h)]}{\Wtil(\beta\gamma+h)}
  -\frac{\E_{\xi}[U(\xi+\gamma)W(\beta(\xi+\gamma)+h)]}{\Wtil(\beta\gamma+h)}\right|^{2k}\le \frac{K}{N^k}.
\end{equation}
\end{lemma}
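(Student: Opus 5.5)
A natural route is to couple the two Gaussians: write $\xi_-=\sqrt{v_-/v}\,\xi$ with $\xi\sim N(0,v)$, or equivalently write both expectations against a standard Gaussian $Z$ as $\xi=\sqrt v Z$ and $\xi_-=\sqrt{v_-}Z$. If $v=0$ both sides coincide trivially, and Lemma~\ref{lem:param-comparison} gives $|v-v_-|\le C/N$; in the high-temperature regime I expect $v>0$, but the argument below avoids dividing by $v$ anyway. Set
\[
  G(s):=\E_Z\big[U(sZ+\gamma)W(\beta(sZ+\gamma)+h)\big],
\]
so that the difference inside \eqref{eq:variance-comparison} equals $(G(\sqrt{v_-})-G(\sqrt v))/\Wtil(\beta\gamma+h)$.

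\textbf{Step 1: mean value theorem in $s$.} By the mean value theorem in $s$,
\[
  |G(\sqrt{v_-})-G(\sqrt v)|\le |\sqrt{v_-}-\sqrt v|\sup_{s}\E_Z\big[|Z|\,\big(|U'(sZ+\gamma)|+S\beta|U(sZ+\gamma)|\big)W(\beta(sZ+\gamma)+h)\big],
\]
where the supremum is over $s$ between $\sqrt{v_-}$ and $\sqrt v$, hence $s\le S$. Differentiation under $\E_Z$ is justified because $U,U'$ have moderate growth and $W(\beta x+h)\le W(h)e^{S\beta|x|}$.

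\textbf{Step 2: controlling $|\sqrt{v_-}-\sqrt v|$.} The main obstacle is that $|\sqrt{v_-}-\sqrt v|$ is only $\le |v-v_-|^{1/2}\le CN^{-1/2}$ if $v$ could be near $0$. That bound still suffices: it yields a $2k$-th moment of order $N^{-k}$, exactly the claimed rate. If $v$ is bounded below uniformly (for instance at high temperature with $S\ge1$, where $p-q>0$), we get the stronger bound $C/N$.

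\textbf{Step 3: normalizing the weight.} Divide by $\Wtil(\beta\gamma+h)\ge W(\beta\gamma+h)$, using $\Dtil\ge D$. As in the proof of Lemma~\ref{lem:replace-r-gamma-weight}, $W(\beta(sZ+\gamma)+h)\le e^{S\beta_0 s|Z|}W(\beta\gamma+h)$. The normalized supremum is therefore at most
\[
  \E_Z\big[|Z|\,e^{S^2\beta_0|Z|}\sup_{|y|\le S|Z|}\big(|U'(y+\gamma)|+S\beta_0|U(y+\gamma)|\big)\big].
\]
By moderate growth with a small $a$, this is bounded by $C\,\E_Z[e^{2a S^2Z^2+c|Z|}]\,e^{2a\gamma^2}\le C'e^{2a\gamma^2}$ once $a$ is small enough that the Gaussian integral converges.

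\textbf{Step 4: taking moments.} Raising to the $2k$-th power and taking $\E$, choose $a$ so that $4ka\le a_0$. Lemma~\ref{lem:center-integrability} then gives $\E e^{4ka\gamma^2}\le K$. Combined with Steps 1 and 2, this yields \eqref{eq:variance-comparison} with bound $K/N^k$, and in fact $K/N^{2k}$ when $v$ is bounded below. Apart from the square-root issue in Step 2, the only care needed is choosing the moderate-growth exponent small relative to $a_0$, which is routine.
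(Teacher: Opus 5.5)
Your proposal is correct and follows essentially the same route as the paper: the common-normal coupling $\xi_-=\sqrt{v_-}\,Z$, $\xi=\sqrt v\,Z$, the bound $|\sqrt{v_-}-\sqrt v|\le\sqrt{|v-v_-|}\le CN^{-1/2}$, a mean-value estimate using $|W'|\le SW$, $\Wtil\ge W$ and $W(y+b)\le e^{S|b|}W(y)$, and then moderate growth together with Lemma~\ref{lem:center-integrability}. Your mean value theorem in the scale $s$ is just the paper's pointwise mean value theorem in $x$ averaged over $Z$. Your aside that the case $v=0$ is trivial is inaccurate, since the two sides would coincide only if also $v_-=0$. This is harmless, because, as you note, the coupling argument never divides by $v$.
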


\begin{proof}
By Lemma~\ref{lem:param-comparison}, $|v-v_-|\le C/N$. Let $Z\sim N(0,1)$ be independent of the disorder and use the common-normal coupling
\[
  \xi_-=\sqrt{v_-}\,Z,\qquad \xi=\sqrt v\,Z.
\]
This coupling also covers the case in which one of the variances is zero, and
\begin{equation}\label{eq:sqrt-variance-coupling}
  |\xi_- -\xi|=|\sqrt{v_-}-\sqrt v|\,|Z|
  \le \sqrt{|v_--v|}\,|Z|\le \frac{C}{\sqrt N}|Z|.
\end{equation}
Apply the mean-value theorem to
\[
  x\longmapsto U(x+\gamma)W(\beta(x+\gamma)+h).
\]
Since $|W'(y)|\le S W(y)$ and $\Wtil(\beta\gamma+h)\ge W(\beta\gamma+h)$, for every $x$ on the segment joining $\xi_-$ and $\xi$,
\[
  \frac{\left|\frac{d}{dx}\left[U(x+\gamma)W(\beta(x+\gamma)+h)\right]\right|}
  {\Wtil(\beta\gamma+h)}
  \le \bigl(|U'(x+\gamma)|+\beta S|U(x+\gamma)|\bigr)e^{S\beta|x|}.
\]
Under the common-normal coupling, $|x|\le S|Z|$. Thus the right-hand side is bounded by a moderate-growth function of $Z$ and $\gamma$. Lemma~\ref{lem:center-integrability} and the Gaussian exponential moments of $Z$ give uniform moments of this majorant. Taking the $2k$-th moment in \eqref{eq:sqrt-variance-coupling} therefore yields the required bound $K/N^k$.
\end{proof}

\begin{lemma}\label{lem:final-local}
Under the same conditions as in Proposition~\ref{prop:ratio}, we have
\begin{equation}\label{eq:final-local}
  \E\left[\langle U(l)\rangle-\frac{\E_\xi[U(\xi+\gamma)W(\beta(\xi+\gamma)+h)]}{\Wtil(\beta\gamma+h)}\right]^{2k}\le \frac{K}{N^k},
\end{equation}
where now $\xi\sim N(0,v)$ and $v=p-q$. Moreover,
\begin{equation}\label{eq:mixture-identity}
  \frac{\E_\xi[U(\xi+\gamma)W(\beta(\xi+\gamma)+h)]}{\Wtil(\beta\gamma+h)}
  =\int U(x)d\nu_N(x),
\end{equation}
where $d\nu_N$ is the mixture of $2S+1$ Gaussian measures defined in \eqref{eq:nu-def}.
\end{lemma}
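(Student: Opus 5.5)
The plan is a telescoping chain that connects $\langle U(l)\rangle$ to the target through the intermediate lemmas already established, each link costing $O(N^{-k})$ in $2k$-th moment. Then the elementary inequality $|x_1+\cdots+x_7|^{2k}\le 7^{2k-1}\sum|x_i|^{2k}$ finishes the proof.

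The chain runs as follows.

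1. Start from $\langle U(l)\rangle$. By Lemma~\ref{lem:lminus-l}, replace it with the tilted ratio $\langle U(l_-)W(\beta_-l_-+h)\rangle_-/\langle W(\beta_-l_-+h)\rangle_-$.
2. Apply Proposition~\ref{prop:ratio} to the $(N-1)$-spin system. This uses parameters $\beta_-$, reference variance $v_-$, cavity field $l_-$ and $r_-=\langle l_-\rangle_-$; the disorder $g_{jN}$ is independent of the $(N-1)$-system, so it plays the role of $J_j$. The ratio is replaced by $\E_{\xi_-}[U(\xi_-+r_-)W(\beta_-(\xi_-+r_-)+h)]/\Wtil_-(\beta_-r_-+h)$.
3. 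Use Lemma~\ref{lem:replace-r-gamma-weight} to move the center in the weight and denominator from $r_-$ to $\gamma$.
4. Use Lemma~\ref{lem:replace-U-center} to move the center in $U$ from $r_-$ to $\gamma$.
5. Use Lemma~\ref{lem:denom-compare} to replace $\Wtil_-(\beta_-\gamma+h)$ by $\Wtil(\beta\gamma+h)$.
6. Use Lemma~\ref{lem:beta-compare} to replace $\beta_-$ by $\beta$ inside $W$.
7. Use Lemma~\ref{lem:variance-comparison} to switch $\xi_-$ to $\xi\sim N(0,v)$.

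One bookkeeping point needs care. Proposition~\ref{prop:ratio} as stated uses $\Wtil$ built from $\Dtil$, the fixed point at the given inverse temperature. For the $(N-1)$ system that fixed point is $\Dtil_-$, which is consistent with \eqref{eq:minus-params}. The intermediate lemmas 3–6 are stated with $\xi$ meaning $\xi_-$, per the convention announced before Lemma~\ref{lem:param-comparison}. So the chain is consistent until the final variance swap in step 7.

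For the identity \eqref{eq:mixture-identity}, I would expand $W(\beta(\xi+\gamma)+h)=\sum_m e^{Dm^2+mh+m\beta\gamma}e^{m\beta\xi}$. Completing the square gives $\E_\xi[U(\xi+\gamma)e^{m\beta\xi}]=e^{m^2\beta^2v/2}\int U(x)\varphi_{\gamma+m\beta v,v}(x)\,dx$; this is the Cameron–Martin shift of the Gaussian. Since $D+\beta^2v/2=\Dtil$, the prefactor becomes $e^{\Dtil m^2+m(\beta\gamma+h)}$. Dividing by $\Wtil(\beta\gamma+h)$ yields exactly the weights $p_m$ and the means $\gamma+m\beta(p-q)$ of \eqref{eq:nu-def}. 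When $v=0$ the computation degenerates consistently to point masses, though the statement presumes $v>0$ via $\varphi$.

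I expect the main obstacle to be step 2. Proposition~\ref{prop:ratio} requires the overlap estimates \eqref{eq:known-overlap-estimate} for the $(N-1)$-system at inverse temperature $\beta_-$ with centering $(p_-,q_-)$. Its Hamiltonian normalization is $\beta/\sqrt N=\beta_-/\sqrt{N-1}$, so it is exactly a GS model of size $N-1$ at $\beta_-\le\beta_0$. The uniformity of constants in $\beta\le\beta_0$ then justifies applying Theorem~\ref{thm:cavity} and Proposition~\ref{prop:ratio} there with constants independent of $N$. This is the same reduction already used implicitly in Lemma~\ref{lem:onsager}, so I would simply cite it.
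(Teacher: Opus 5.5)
Your proposal is correct and follows the paper's own argument: the same telescoping chain through Lemma~\ref{lem:lminus-l}, Proposition~\ref{prop:ratio} applied to the $(N-1)$-spin system, the four replacement lemmas, and Lemma~\ref{lem:variance-comparison}, followed by the same completing-the-square identification of $\nu_N$ (using $D+\beta^2v/2=\Dtil$). The only difference is cosmetic: the paper uses five links $A_0\to A_5$, two of which each rest on two lemmas, whereas you split these into seven links so that each link matches exactly one lemma.
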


\begin{proof}
Combining Proposition~\ref{prop:ratio}, Lemmas~\ref{lem:lminus-l}-\ref{lem:beta-compare}, and Lemma~\ref{lem:variance-comparison} gives the following step by step approximations,
\begin{align*}
A_0&:=\langle U(l)\rangle,
\\
A_1&:=\frac{\langle U(l_-)W(\beta_-l_-+h)\rangle_-}{\langle W(\beta_-l_-+h)\rangle_-}, \\ 
A_2&:=\frac{\E_{\xi_-}[U(r_-+\xi_-)W(\beta_-(r_-+\xi_-)+h)]}{\Wtil_-(\beta_-r_-+h)}, \\
A_3&:=\frac{\E_{\xi_-}[U(\gamma+\xi_-)W(\beta_-(\gamma+\xi_-)+h)]}{\Wtil_-(\beta_-\gamma+h)},\\
A_4&:=\frac{\E_{\xi_-}[U(\gamma+\xi_-)W(\beta(\gamma+\xi_-)+h)]}{\Wtil(\beta\gamma+h)},\\
A_5&:=\frac{\E_{\xi}[U(\gamma+\xi)W(\beta(\gamma+\xi)+h)]}{\Wtil(\beta\gamma+h)}.
\end{align*}
More explicitly, each adjacent difference satisfies
\begin{equation}\label{eq:adjacent-errors}
  \E|A_i-A_{i+1}|^{2k}\le \frac{C}{N^k},\qquad 0\le i\le 4.
\end{equation}
For $A_1-A_2$, this is Proposition~\ref{prop:ratio} applied to the $(N-1)$-spin system. The differences $A_2-A_3$, $A_3-A_4$, and $A_4-A_5$ are exactly the replacements controlled in Lemmas~\ref{lem:replace-r-gamma-weight}-\ref{lem:variance-comparison}. Therefore, by the triangle inequality and \eqref{eq:adjacent-errors},
\[
  \E|A_0-A_5|^{2k}\le \frac{C}{N^k},
\]
which proves \eqref{eq:final-local}.

It remains to identify the last expression. Recall $v=p-q$ and $\xi\sim N(0,v)$. Expanding $W$,
\[
  \E_\xi[U(\xi+\gamma)W(\beta(\xi+\gamma)+h)]
  =\sum_{m=-S}^{S}e^{Dm^2+m(\beta\gamma+h)}\E_\xi[U(\xi+\gamma)e^{m\beta\xi}].
\]
Rewriting the expectation on the r.h.s, we have
\[
  \E_\xi[U(\xi+\gamma)e^{m\beta\xi}]
  =e^{m^2\beta^2v/2}\int U(x)\varphi_{\gamma+m\beta v,v}(x)d x.
\]
Therefore
\[
  \frac{\E_\xi[U(\xi+\gamma)W(\beta(\xi+\gamma)+h)]}{\Wtil(\beta\gamma+h)}
  =\sum_{m=-S}^{S}\frac{e^{\Dtil m^2+m(\beta\gamma+h)}}{\Wtil(\beta\gamma+h)}
  \int U(x)\varphi_{\gamma+m\beta v,v}(x)d x,
\]
which is exactly $\int U(x)d\nu_N(x)$ by \eqref{eq:nu-def}.
\end{proof}

\begin{proof}[Proof of Theorem~\ref{thm:local}]
The theorem follows immediately from Lemma~\ref{lem:final-local}. The identification of the limiting measure is exactly \eqref{eq:mixture-identity}.
\end{proof}

\bibliographystyle{amsra}
\bibliography{GS.bib}

\end{document}